\documentclass[10pt]{amsart}
\usepackage[utf8]{inputenc}
\usepackage{amsmath,amssymb,amsthm}
\usepackage{mathrsfs}
\usepackage{enumitem}
\usepackage{xcolor}
\definecolor{RED}{named}{red}
\newtheorem{theorem}{Theorem}[section]
\newtheorem{lemma}[theorem]{Lemma}
\theoremstyle{definition}
\newtheorem{definition}[theorem]{Definition}
\newtheorem{remark}[theorem]{Remark}

\usepackage{amsmath}
\usepackage{amsfonts}
\usepackage{amssymb}

\usepackage{amsxtra}

\usepackage{amsthm}
\usepackage{amsmath,amssymb,amsfonts,amsthm,color,latexsym}
\usepackage{xcolor}
\usepackage{float}

\newcommand{\C}{\mathbb{C}}
\newcommand{\F}{\mathcal{F}}

\newcommand{\B}{\mathcal{B}}

\newtheorem{example}[theorem]{Example}

\newtheorem{proposition}[theorem]{Proposition}
\newtheorem{corollary}[theorem]{Corollary}

\usepackage{todonotes}

\title[On Milnor and Tjurina numbers of Pairs of Holomorphic Functions Germs]{On Milnor and Tjurina numbers of Pairs of Holomorphic Functions Germs}

\date{\today}

\author[A. Fern\'andez-P\'erez]{Arturo Fern\'andez-P\'erez}
\address[Arturo Fern\'{a}ndez P\'erez] {Department of Mathematics. Federal University of Minas Gerais. Av. Ant\^onio Carlos, 6627 
CEP 31270-901\\
Pampulha - Belo Horizonte - Brazil. ORCID: 0000-0002-5827-8828}
\email{fernandez@ufmg.br}

\author[E. R.  Garc\'{i}a Barroso]{Evelia R. Garc\'{i}a Barroso}
\address[Evelia R. Garc\'{i}a Barroso]{Dpto. Matem\'{a}ticas, Estad\'{\i}stica e Investigaci\'on Operativa\\
Instituto Universitario de Matem\'aticas y Aplicaciones (IMAULL)\\
Universidad de La Laguna. Apartado de Correos 456. 38200 La Laguna, Tenerife, Spain. ORCID: 0000-0001-7575-2619}
\email{ergarcia@ull.es}

\author[N. Saravia-Molina]{Nancy Saravia-Molina}
\address[Nancy Saravia-Molina]{Dpto. Ciencias - Secci\'{o}n Matem\'{a}ticas, Pontificia Universidad Cat\'{o}lica del Per\'{u}, Av. Universitaria 1801,
San Miguel, Lima 32, Peru. ORCID: 0000-0002-2819-8835}
\email{nsaraviam@pucp.edu.pe}

\subjclass[2020]{Primary 32S65 - 14H20}

\keywords{Singular holomorphic foliations, meromorphic functions, Tjurina number of pairs, Milnor number of pairs, pencil of plane curves} 

\thanks{
The authors gratefully acknowledge the support of Universidad de La Laguna (Tenerife, Spain), where part of this work was carried out. This work was partially supported by  the Spanish grant PID2023-149508NB-I00, funded by 
MICIU/AEI
/10.13039/501100011033 and by
FEDER, UE. It was also supported by  the Vicerrectorado de Investigación
(VRI) at the PUCP through grant DFI-2025-PI1275. The first author also acknowledges the support of CNPq-Brazil through Projeto Universal 408687/2023-1 "Geometria das Equa\c{c}\~oes Diferenciais Alg\'ebricas" and the PQ fellowship CNPq-Brazil 306011/2023-9}

\begin{document}
\maketitle
\begin{abstract}
Motivated by the bifurcation formula for the Milnor number of pairs of holomorphic function germs, we introduce, via foliation theory, the Tjurina number of such pairs. We prove that this is an analytic invariant, derive an explicit formula for it, and establish several of its properties, including a bifurcation formula. As an application, we characterize semitame meromorphic function germs in terms of this invariant. We also investigate several properties of the Milnor number of a pair of holomorphic function germs. In particular, we establish a version of Teissier's Lemma for pairs, derive an upper bound for the Milnor number of a pair, and apply these results to generic pencils of algebraic curves.
\end{abstract}

\section{Introduction}
The study of numerical invariants associated with a holomorphic function $f\in\mathbb{C}\{x,y\}$ having an isolated singularity has been a central topic in Singularity Theory. Among the most important such invariants are the Milnor number $\mu(f)$ \cite{Milnor}
 and the Tjurina number $\tau(f)$ \cite{Tjurina}. By contrast, numerical invariants associated with a meromorphic function $f/g$ have received much less attention. Recently, A. Szawlowski \cite{Sz1} introduced the notion of the Milnor number of a pair $(f,g)$. This invariant can be interpreted as the Milnor number of the meromorphic function $f/g$, since the classical Milnor number is recovered whenever $g$ is a unit in $\mathbb{C}\{x,y\}$. Szawlowski's approach is based on the singular foliation $\F_{\omega_{f/g}}$, defined by the $1$-form $\omega_{f/g}=gdf-fdg$, na\-turally associated with the pencil generated by  $f$ and $g$. Motivated by this idea, the aim of the present paper is to introduce the notion of the Tjurina number of a pair $(f,g)$, which may also be regarded as the Tjurina number of the meromorphic function $f/g$. To this end, our main tool is the notion of the Tjurina number of a foliation $\F$ with respect to an $\F$-invariant reduced curve, introduced in \cite[Section 6]{FS-GB-SM1}. In particular, we apply this notion to the foliation $\mathcal{F}_{\omega_{f/g}}$ in various contexts. More precisely, in Section~2 we define the Tjurina number of the pair $(f,g)$ with respect to a member $C_{[\alpha:\beta]}$ of the pencil of curves
\[
\mathcal{P}_{f,g}: C_{[\alpha:\beta]}=\{\alpha f+\beta g=0\},\qquad [\alpha:\beta]\in\mathbb{P}^1,
\]
as the Tjurina number of the foliation $\mathcal{F}_{\omega_{f/g}}$ with respect to the invariant curve $C_{[\alpha:\beta]}$. We also relate this invariant to the Tjurina number of $C_{[\alpha:\beta]}$ and to $\mu(f,g)$. We also introduce the notion of the generic Tjurina number of a pair $(f,g)$, relating it to the generic Tjurina number of the associated pencil, and show by means of an example (see Example \ref{ex1}) that it is not a topological invariant. \\

\noindent In Section~3, we consider the balanced divisor of separatrices associated with the foliation $\mathcal{F}_{\omega_{f/g}}$, whose zero divisor is supported on the bifurcation fibers together with the fibers defined by $f$ and $g$, while its pole divisor is supported on generic fibers of the pencil $\mathcal{P}_{f,g}$. The main result
of this  section (see Theorem \ref{teo2})  provides an explicit formula for the difference between the Tjurina numbers of
the foliation $\mathcal{F}_{\omega_{f/g}}$ with respect to its zero and pole divisors.\\

\noindent In Section~4, we introduce the notion of the Tjurina number of a foliation $\mathcal{F}$ with respect to a balanced divisor of separatrices. We prove that, for the foliation $\mathcal{F}_{\omega_{f/g}}$, this invariant is independent of the choice of balanced divisor adapted to the curve $fg=0$. This allows us to define the Tjurina number of the pair $(f,g)$, denoted by $\tau(f,g)$, as the Tjurina number of $\mathcal{F}_{\omega_{f/g}}$ with respect to any such balanced divisor. Theorem~\ref{coro:bif} provides an explicit formula for $\tau(f,g)$ in terms of the Tjurina numbers of the zero and pole divisors of a balanced divisor of separatrices adapted to $fg=0$, the number of bifurcation separatrices of the pencil $\mathcal{P}_{f,g}$, and the intersection multiplicity $I_0(f,g)$. We also establish a Bifurcation Formula for $\tau(f,g)$ and characterize semitame pairs in terms of this invariant (see Corollary~\ref{caracsemitame}).\\

\noindent In Sections~5 and~6, we turn our attention to the Milnor number of a pair of holomorphic functions $\mu(f,g)$ and investigate some of its properties. More precisely, Section~5 is devoted to establishing a version of Teissier's Lemma for pairs of holomorphic functions, while in Section~6 we derive an upper bound for $\mu(f,g)$ in terms of the Tjurina number of the bifurcation fibers. Finally, in Section 7, we give an application to pencils of algebraic curves, relating to a formula of Dimca \cite{Dimca2017} for generic pencils.

\section{the Tjurina number of a pair with respect to a fiber of the associated pencil} \label{sec: Tjurina-pair-fiber}
\noindent Let $\mathcal{O}_2:=\mathbb C\{x,y\}$ denote the ring of convergent complex power series in two variables.  
\noindent Let $f,g\in \mathcal{O}_2$. The intersection multiplicity of the curves $f(x,y)=0$ and $g(x,y)=0$ at the origin is denoted by $I_0(f,g)$ and is defined by
$I_0(f,g)=\dim_{\mathbb C}\mathcal{O}_2/(f,g)$,
where $(f,g)$ denotes the ideal of $\mathcal{O}_2$ generated by $f(x,y)$ and $g(x,y)$. The Milnor number of a reduced plane curve $C:f(x,y)=0$ is by definition $\mu(C):=I_0(f_x,f_y)$ and  the Tjurina number of $C$ is $\tau(C):=\dim_{\mathbb C}\mathcal{O}_2/(f,f_x,f_y)$. 

Given relatively prime elements $f,g\in \mathcal{O}_2$, we
consider the holomorphic foliation $\mathcal{F}_{\omega _{f/g}}$ defined by the $1$-form
\[
\omega _{f/g}:=gdf-fdg=(gf_x - fg_x)dx+(gf_y - fg_y)dy.
\]

We assume that the foliation $\mathcal{F}_{\omega_{f/g}}$ has an isolated singularity at $0\in\C^2$. Note that this condition is equivalent to finite determinacy of the meromorphic function $f/g$ in the sense of Cerveau-Mattei \cite[p. 147]{Cerveau}.
By \cite[Lemma 4.1]{FFMR}, all the fibers of the pencil are reduced.
\begin{remark}
   Since every fiber  of the pencil $\mathcal{P}_{f,g}:C_{[\alpha:\beta]} = \{\, \alpha f + \beta g = 0 \,\},  [\alpha:\beta]\in \mathbb{P}^1$, is a $\F_{\omega_{f/g}}-$invariant curve, the foliation is \textit{dicritical}. Moreover, it is a \textit{generalized curve} foliation, since  its reduction of singularities has no  saddle-nodes (for more details see \cite[Chapter II]{Camacho}). 
\end{remark}

\par Let $\mathcal{F}:\omega=0$ be a germ of holomorphic foliation at the origin defined by the $1$-form  $\omega=P(x,y)dx+Q(x,y)dy$ and let $C:h=0$ be a reduced plane curve. The {\it Tjurina number of $\mathcal{F}$ with respect to $C$}  was studied in \cite{FS-GB-SM1} and is defined by
\begin{equation}\label{def:Tj-fol-curve}
\tau(\mathcal{F},C):=\dim_{\mathbb C} \frac{\mathcal{O}_2}{(P,Q,h)}.
\end{equation}

\begin{definition} Consider the pencil $\mathcal{P}_{f,g}=\{\alpha f+\beta g\;:\;[\alpha:\beta]\in \mathbb{P}^1\}$ generated by $f,g \in \mathcal{O}_2$. The {\it Tjurina number of the pair} $(f,g)$ with respect to the fiber $C_{[\alpha:\beta]}: \alpha f+\beta g=0$ of $\mathcal{P}_{f,g}$ is defined as the Tjurina number of $\mathcal{F}_{\omega _{f/g}}$ with respect to the reduced curve $C_{[\alpha:\beta]}$, namely
\begin{equation}\label{Tpair}
\tau_{[\alpha:\beta]}(f,g):=\tau\left(\mathcal{F}_{\omega _{f/g}},C_{[\alpha:\beta]}\right)=\dim_{\mathbb C}  \frac{\mathcal{O}_2}{(gf_x-fg_x,gf_y-fg_y,\alpha f+\beta g)}.
\end{equation}
\end{definition}
\medskip

\noindent Denote by $GSV_p(\mathcal{F}, C)$ the GSV-index of $\mathcal{F}$ along the $\mathcal F$-invariant reduced curve $C$ at $p$. This is an analytic invariant of $\F$ and $C$ (see \cite{GSV-paper}).

\par Next we show some results on the Tjurina number of the pair $(f,g)$ with respect to a member of pencil $\mathcal{P}_{f,g}$. 
\begin{proposition}\label{Tju}
Let $f,g\in \mathcal{O}_2$ be relatively prime and let $C_{[\alpha:\beta]}: \alpha f+\beta g=0$,  $[\alpha:\beta] \in\mathbb{P}^1$   be a reduced curve. Then 
\[
\tau_{[\alpha:\beta]}(f,g)=I_0(f,g)+\tau\left(C_{[\alpha:\beta]}\right).
\]
\end{proposition}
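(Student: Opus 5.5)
The plan is to reduce to the fiber $C_{[1:0]}:f=0$ by a change of generators of the pencil, and then to split the resulting colength as $\tau(C)$ plus a correction term computed on the curve ring of $C$.

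\emph{Reduction to the fiber $f=0$.} Put $h=\alpha f+\beta g$ and choose $(\gamma,\delta)$ with $D:=\alpha\delta-\beta\gamma\neq 0$, and set $k=\gamma f+\delta g$. A direct computation gives
\[
k\,dh-h\,dk=D\,(g\,df-f\,dg).
\]
So $\omega_{k/h}$, which is $\omega_{h/k}$ up to sign, is a nonzero constant multiple of $\omega_{f/g}$. Hence the ideal generated by its coefficients is the same, and $(h,k)=(f,g)$ as ideals, so $I_0(h,k)=I_0(f,g)$. Moreover $h,k$ are still relatively prime. We may therefore assume $[\alpha:\beta]=[1:0]$, that is, $C:f=0$ with $f$ reduced. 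Modulo $f$ the generators $gf_x-fg_x$ and $gf_y-fg_y$ become $gf_x$ and $gf_y$, so
\[
\tau_{[1:0]}(f,g)=\dim_{\mathbb C}\mathcal O_2/(f,gf_x,gf_y).
\]

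\emph{Splitting the colength.} Work in $R=\mathcal O_2/(f)$, a reduced one-dimensional Cohen--Macaulay ring, and let $J=(f_x,f_y)R$. Since $gJ\subset J\subset R$, we get
\[
\dim_{\mathbb C} R/gJ=\dim_{\mathbb C} R/J+\dim_{\mathbb C} J/gJ=\tau(C)+\dim_{\mathbb C} J/gJ .
\]
It remains to show that $\dim_{\mathbb C} J/gJ=I_0(f,g)=\dim_{\mathbb C} R/gR$.

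\emph{Main step.} Because $f$ and $g$ are coprime and $f$ is reduced, $g$ is a nonzerodivisor on $R$. The module $J\subset R$ is a finitely generated torsion-free $R$-module, hence maximal Cohen--Macaulay. It has rank one on every branch $f_i$ of $C$: $f$ is reduced, so $C$ has an isolated singularity, and therefore $f_x,f_y$ do not both vanish identically on any branch. For a nonzerodivisor $g$ and a maximal Cohen--Macaulay module $M$ over the one-dimensional ring $R$, the length $\dim_{\mathbb C} M/gM$ equals the multiplicity symbol $e(g;M)$. This symbol is additive and satisfies
\[
e(g;M)=\sum_i \operatorname{rank}_{f_i}(M)\, I_0(f_i,g).
\]
Concretely, one can compare $M$ with its normalization $\bigoplus_i \mathbb C\{t_i\}^{r_i}$; multiplication by $g$ is injective on the finite-length cokernel's ambient modules, so the difference in colengths cancels. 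Applying this to $M=J$ and to $M=R$ gives
\[
\dim_{\mathbb C} J/gJ=\sum_i I_0(f_i,g)=I_0(f,g).
\]
This is the step I expect to need the most care: rank-one on each branch plus torsion-freeness must give exactly the same length as for $R$. I would justify it either via the additivity of $e(g;\cdot)$ on short exact sequences $0\to J\to R\to R/J\to 0$, where $e(g;R/J)=0$ since $R/J$ has finite length, or directly by the snake lemma applied to multiplication by $g$ on that sequence.

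\emph{Conclusion.} The snake-lemma version is clean. Multiplication by $g$ is injective on $J$ and on $R$. On the finite-dimensional space $R/J$, kernel and cokernel of multiplication by $g$ have equal dimension. The snake lemma then gives $\dim J/gJ=\dim R/gR$. Combining the three steps yields
\[
\tau_{[\alpha:\beta]}(f,g)=I_0(f,g)+\tau\left(C_{[\alpha:\beta]}\right).
\]
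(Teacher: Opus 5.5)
Your proof is correct, and it takes a genuinely different route from the paper.

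\textbf{The paper's route.} The proof there combines two cited foliation results in two lines: the identity $\tau(\mathcal{F},C)=\tau(C)+GSV(\mathcal{F},C)$ for an invariant reduced curve \cite[Proposition 6.2]{FS-GB-SM1}, and the computation $GSV(\mathcal{F}_{\omega_{f/g}},C_{[\alpha:\beta]})=I_0(f,g)$ \cite[Proposition 4.7]{FFMR}.

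\textbf{Your route.} You first normalize to the fiber $h=0$ by an invertible change of generators of the pencil. The identity $k\,dh-h\,dk=D(g\,df-f\,dg)$ and the equality of ideals $(h,k)=(f,g)$ are both right. You then identify $\tau_{[1:0]}$ with the colength of $gJ$ in $R=\mathcal{O}_2/(f)$. Finally you get $\dim_{\mathbb C}J/gJ=\dim_{\mathbb C}R/gR=I_0(f,g)$ from the snake lemma for multiplication by $g$ on $0\to J\to R\to R/J\to 0$.

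The snake-lemma step is complete on its own. It needs only that $g$ is a nonzerodivisor on $R$ and that $R/J$ is finite-dimensional. So the detour through maximal Cohen--Macaulay modules, branchwise ranks and $e(g;M)$ can simply be deleted. In effect you are proving by hand the special case of the GSV formula where the form is $g\,df$ modulo $f$, so the index is visibly $I_0(f,g)$.

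\textbf{Comparison.} Your argument is elementary and self-contained. It shows the identity holds for any coprime $f,g$ with $\alpha f+\beta g$ reduced, without using the isolated singularity of $\mathcal{F}_{\omega_{f/g}}$ or its generalized-curve property. The paper's argument is shorter given the literature, and it identifies the extra term $I_0(f,g)$ as a GSV index. That interpretation is what the paper reuses later, in Corollary \ref{mu} and, via the branch formula for GSV, in Lemma \ref{lemma_tjurina}.
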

\begin{proof} 
Applying \cite[Proposition 6.2]{FS-GB-SM1} to the $\mathcal{F}_{\omega_{f/g}}$-invariant curve $C_{[\alpha:\beta]}:\alpha f+\beta g =0$ we obtain
\begin{equation}\label{ec1}
GSV\left(\mathcal{F}_{\omega_{f/g}}, C_{[\alpha:\beta]}\right)=\tau\left(\mathcal{F}_{\omega_{f/g}} , C_{[\alpha:\beta]}\right) - \tau\left(C_{[\alpha:\beta]}\right). 
\end{equation} 
On the other hand \cite[Proposition 4.7]{FFMR} yields
$GSV(\mathcal{F}_{\omega_{f/g}}, C_{[\alpha:\beta]})=I_0(f,g)$. The result now follows from the definition of $\tau_{[\alpha:\beta]}(f,g)$ and  equality \eqref{ec1}.
\end{proof}

\begin{remark}
It is well known that $\mu(fg)=\mu(f)+\mu(g)+2I_0(f,g)-1$. Combining this identity with Proposition \ref{Tju}, we obtain
\[
2\tau_{[\alpha:\beta]}(f,g)=\mu(fg)-\mu(f)-\mu(g)+2\tau(C_{[\alpha:\beta]})+1.
\]
\end{remark}

\bigskip

The {\it Milnor number of the pair} $(f,g)$ was introduced by Szawlowski \cite{Sz1} and is defined by

\begin{equation}\label{Mpair}
\mu(f,g):=\mu\left(\mathcal{F}_{\omega_ {f/g}}\right)=\dim_{\mathbb C} \frac{\mathcal{O}_2}{(gf_x-fg_x,gf_y-fg_y)}.
\end{equation}

Note that   $\mu(f,g) \geq \tau_{[\alpha:\beta]}(f,g)$.

In \cite{Suwa}, Suwa  studied  the unfoldings of germs of meromorphic functions and introduced the invariant
\begin{equation}\label{def:nu}
\nu(f,g)=dim_{\mathbb{C}}\frac{(f,g)}{(f_xg-g_xf,f_yg-g_yf)}.
\end{equation}

\medskip

\noindent Equations \eqref{Mpair} and \eqref{def:nu} imply that
\begin{equation}\label{im}
I_0(f,g)=\mu(f,g)-\nu(f,g).
\end{equation}

\begin{corollary}\label{ecc}
Let $f,g\in \mathcal{O}_2$ be relatively prime and let $C_{[\alpha:\beta]}: \alpha f+\beta g=0$,  $[\alpha:\beta] \in\mathbb{P}^1$   be a reduced curve. We have
\begin{equation}\nonumber   
\tau_{[\alpha:\beta]}(f,g)=\tau(C_{[\alpha:\beta]})+\mu(f,g)-\nu(f,g).
\end{equation}
\end{corollary}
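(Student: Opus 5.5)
The corollary follows by substituting one earlier identity into another. The plan is to start from Proposition~\ref{Tju}, which gives, for any reduced fiber $C_{[\alpha:\beta]}$,
\[
\tau_{[\alpha:\beta]}(f,g)=I_0(f,g)+\tau\left(C_{[\alpha:\beta]}\right),
\]
and then replace $I_0(f,g)$ using identity \eqref{im}, namely $I_0(f,g)=\mu(f,g)-\nu(f,g)$. Combining the two gives exactly $\tau_{[\alpha:\beta]}(f,g)=\tau(C_{[\alpha:\beta]})+\mu(f,g)-\nu(f,g)$.

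Everything therefore rests on \eqref{im}, which I would justify explicitly. Write $J=(gf_x-fg_x,\,gf_y-fg_y)$. Each generator of $J$ lies in $(f,g)$, so we have the chain of ideals
\[
J\subseteq (f,g)\subseteq \mathcal{O}_2 .
\]
This chain gives a short exact sequence of $\mathbb{C}$-vector spaces
\[
0\to (f,g)/J\to \mathcal{O}_2/J\to \mathcal{O}_2/(f,g)\to 0,
\]
and additivity of dimension then yields $\mu(f,g)=\nu(f,g)+I_0(f,g)$.

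All three dimensions must be finite for this to make sense:
\begin{itemize}
\item $\mu(f,g)$ is finite by the standing assumption that $\mathcal{F}_{\omega_{f/g}}$ has an isolated singularity at the origin.
\item $I_0(f,g)$ is finite because $f$ and $g$ are relatively prime.
\item $\nu(f,g)$ is finite because it is the dimension of a subspace of the finite-dimensional space $\mathcal{O}_2/J$.
\end{itemize}

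The only point that needs care is this finiteness check, which is what licenses the subtraction. No further obstacle is expected.
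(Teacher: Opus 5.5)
Your proposal is correct and matches the paper's proof, which simply substitutes \eqref{im} into Proposition~\ref{Tju}. Your derivation of \eqref{im} from the chain $(gf_x-fg_x,\,gf_y-fg_y)\subseteq(f,g)\subseteq\mathcal{O}_2$ makes explicit what the paper leaves implicit, as does your finiteness check, which correctly relies on the standing isolated-singularity assumption.
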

\begin{proof}
It follows from \eqref{im} and Proposition \ref{Tju}.
\end{proof}

Denote by $\mu\left(\mathcal{F}_{\omega_{f/g}}, C_{[\alpha:\beta]}\right)$ the multiplicity of $\mathcal{F}_{\omega_{f/g}}$ along  the fiber $C_{[\alpha:\beta]}$ (see \cite[p. 152]{Camacho}). 
 \begin{corollary}\label{mu}
For every $[\alpha:\beta] \in \mathbb{P}^1,$
$\mu\left(\mathcal{F}_{\omega_{f/g}},C_{[\alpha:\beta]}\right)=I_0(f,g)+\mu(C_{[\alpha:\beta]}).$
\end{corollary}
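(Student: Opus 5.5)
We follow the pattern of the proof of Proposition \ref{Tju}, now with the Milnor-type relation between the multiplicity of a foliation along an invariant curve and the GSV-index. For a reduced $\F$-invariant curve $C:h=0$ of a foliation $\F$ (here $\F_{\omega_{f/g}}$, which is a generalized curve foliation by the Remark above), one has the identity
\[
GSV(\F,C)=\mu(\F,C)-\mu(C),
\]
which is the Milnor-number counterpart of \cite[Proposition 6.2]{FS-GB-SM1}. It can be obtained from the definitions of the GSV-index and of the multiplicity along $C$ as in \cite{Camacho} and \cite{GSV-paper}; it also appears in the Brunella/Cerveau–Lins Neto literature. We would apply it with $\F=\F_{\omega_{f/g}}$ and $C=C_{[\alpha:\beta]}$. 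This is legitimate for every $[\alpha:\beta]\in\mathbb{P}^1$: by \cite[Lemma 4.1]{FFMR} all fibers of the pencil are reduced, and each fiber is $\F_{\omega_{f/g}}$-invariant.

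Next we would use \cite[Proposition 4.7]{FFMR}, which gives $GSV(\F_{\omega_{f/g}},C_{[\alpha:\beta]})=I_0(f,g)$, exactly as in the proof of Proposition \ref{Tju}. Substituting this into the identity above yields $\mu(\F_{\omega_{f/g}},C_{[\alpha:\beta]})=I_0(f,g)+\mu(C_{[\alpha:\beta]})$, which is the statement.

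The main obstacle is justifying the relation $GSV=\mu(\F,C)-\mu(C)$ in the normalization of \cite[p.~152]{Camacho}. If it is not directly citable, we would derive it by hand. Write $h=\alpha f+\beta g$ and use the decomposition $g\,\omega_{f/g}$-type relation $q\,\omega = k\,dh + h\,\eta$ along $C$, taking $k=g$ (or $f$ when $\beta\neq 0$ is inconvenient). This holds because $g\,dh-h\,dg=\alpha\,\omega_{f/g}$. The GSV-index is then the integral of $k/q$ over the Milnor fibre boundary, while the multiplicity along $C$ is computed from a parametrization of each branch of $C$. A direct branch-by-branch computation via a Puiseux parametrization, together with Milnor's formula $\mu(C)=2\delta(C)-r(C)+1$, should reproduce the identity.

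As a fallback, we would run a direct intersection-theoretic argument. We would compare $\mu(\F,C)$ with $\tau(\F,C)$ and use the fact that $\mu(C)-\tau(C)$ equals the corresponding defect for the foliation, given Proposition \ref{Tju}. We expect the first route to suffice.
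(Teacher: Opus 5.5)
Your main route is exactly the paper's: the identity $GSV(\F,C)=\mu(\F,C)-\mu(C)$ is available as \cite[Proposition 4.1]{FS-GB-SM2}, and combining it with $GSV(\F_{\omega_{f/g}},C_{[\alpha:\beta]})=I_0(f,g)$ from \cite[Proposition 4.7]{FFMR} gives the statement immediately. The by-hand derivation is therefore unnecessary, and you should drop the fallback: the ``defect'' equality it relies on is Corollary~\ref{coro1}, which the paper deduces from this very statement, so that route would be circular.
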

\begin{proof}
By \cite[Proposition 4.1]{FS-GB-SM2},
\begin{equation}\label{ec2}
GSV(\mathcal{F}_{\omega _{f/g}},C_{[\alpha:\beta]})=\mu\left(\mathcal{F}_{\omega_{f/g}}, C_{[\alpha:\beta]}\right) - \mu(C_{[\alpha:\beta]}). 
\end{equation} 
As in Proposition \ref{Tju}, we have 
$GSV(\mathcal{F}_{\omega_{f/g}}, C_{[\alpha:\beta]})=I_0(f,g)$. Substituting this equality into \eqref{ec2} completes the proof.
\end{proof}
From Proposition \ref{Tju} and Corollary \ref{mu}, we obtain
\begin{corollary}\label{coro1} Under the assumptions of Proposition \ref{Tju}, the following identity holds:
    \[
    \mu(\mathcal{F}_{\omega _{f/g}}, C_{[\alpha:\beta]}) - \tau(\mathcal{F}_{\omega_{f/g}}, C_{[\alpha:\beta]})= \mu(C_{[\alpha:\beta]})- \tau(C_{[\alpha:\beta]}). 
    \]
    In particular
    $\mu(\mathcal{F}_{\omega _{f/g}}, C_{[\alpha:\beta]}) = \tau_{[\alpha:\beta]}(f,g)$ if and only if $C_{[\alpha:\beta]}$ is quasi-homogeneous by Saito \cite{Saito}. 
\end{corollary}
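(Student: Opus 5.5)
The identity follows by subtracting the two formulas already established for the fiber $C_{[\alpha:\beta]}$. By Corollary \ref{mu},
\[
\mu\left(\mathcal{F}_{\omega_{f/g}},C_{[\alpha:\beta]}\right)=I_0(f,g)+\mu(C_{[\alpha:\beta]}),
\]
and by Proposition \ref{Tju}, together with the definition $\tau_{[\alpha:\beta]}(f,g)=\tau(\mathcal{F}_{\omega_{f/g}},C_{[\alpha:\beta]})$,
\[
\tau\left(\mathcal{F}_{\omega_{f/g}},C_{[\alpha:\beta]}\right)=I_0(f,g)+\tau(C_{[\alpha:\beta]}).
\]
Both formulas hold under the standing hypotheses: $f,g$ are relatively prime, $C_{[\alpha:\beta]}$ is reduced (guaranteed by \cite[Lemma 4.1]{FFMR}), and $\mathcal{F}_{\omega_{f/g}}$ has an isolated singularity. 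Subtracting the second from the first, the common term $I_0(f,g)$, which equals the GSV-index in both derivations, cancels. This leaves
\[
\mu(\mathcal{F}_{\omega_{f/g}},C_{[\alpha:\beta]})-\tau(\mathcal{F}_{\omega_{f/g}},C_{[\alpha:\beta]})=\mu(C_{[\alpha:\beta]})-\tau(C_{[\alpha:\beta]}).
\]

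For the ``in particular'' statement, the displayed identity shows that $\mu(\mathcal{F}_{\omega_{f/g}},C_{[\alpha:\beta]})=\tau_{[\alpha:\beta]}(f,g)$ holds if and only if $\mu(C_{[\alpha:\beta]})=\tau(C_{[\alpha:\beta]})$. Saito's theorem \cite{Saito} states that a reduced plane curve germ with an isolated singularity satisfies $\mu=\tau$ exactly when it is quasi-homogeneous in suitable coordinates. Since $C_{[\alpha:\beta]}$ is reduced, Saito's theorem applies and gives the equivalence.

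The only point needing care is a degenerate case: if $C_{[\alpha:\beta]}$ is smooth at $0$, then $\mu=\tau=0$. We adopt the usual convention that smooth germs are quasi-homogeneous, so the equivalence still holds in this case. No further obstacle is expected, since the statement is a direct combination of the two earlier results.
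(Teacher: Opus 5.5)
Your proposal is correct and follows the paper's own argument: the paper derives the identity directly from Proposition \ref{Tju} and Corollary \ref{mu}, where the common GSV term $I_0(f,g)$ cancels on subtraction, and then applies Saito's theorem to get the equivalence with $\mu(C_{[\alpha:\beta]})=\tau(C_{[\alpha:\beta]})$. Your remark on the smooth case is a harmless convention that the paper leaves implicit.
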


Similarly to the Milnor number of a generic member of a pencil,
the Tjurina number of a pair also exhibits a generic value. Indeed,  by \cite[Theorem 2.6]{G-L-S} the Tjurina number defines an upper semicontinuous function on the family of reduced curves $C_{[\alpha:\beta]}$. Hence, there exists a finite subset
 $\mathcal B(f,g)\subset \mathbb P^1$ (the projective line) such that for every $[\alpha:\beta]\in  \mathcal B(f,g)$, whereas $\tau_{gen}:=\min_{[\alpha:\beta]\in\mathbb{P}^1}\tau(C_{[\alpha:\beta]})< \tau(C_{[\alpha:\beta]})\leq + \infty$, where $\tau_{gen}$ is the \textit{generic Tjurina number of the pencil $\mathcal P_{f,g}$}. Now, 
from Proposition \ref{Tju} we get $\tau_{[\alpha:\beta]}(f,g)=I_0(f,g)+\tau\left(C_{[\alpha:\beta]}\right)$, hence
we can define the \textit{generic Tjurina number} of the pair $(f,g)$ as 
\begin{equation}
    \tau_{gen}(f,g):=\tau_{[\alpha:\beta]}(f,g)\,\,\,\,\,\text{where}\,\,\,\,[\alpha:\beta]\in\mathbb{P}^1\setminus\B(f,g).
\end{equation}

\par We say that two pairs $(f,g)$ and $(f_1,g_1)$ are topologically equivalent if $f$ and $g$ are topologically equivalent to $f_1$ and $g_1$ respectively.

Let us look at the following example.
\begin{example}\label{ex1}
Let $f=y^3-x^7$ and $g=y^4-x^9$ in $\mathcal{O}_2$. Define $f_1=f+x^5y$ and $g_1=g-4yx^7-2y^2x^5+x^{10}$. It follows from \cite[p. 218]{G-L-S} that $f$ and $f_1$ are topologically equivalent. Likewise, $g$ and $g_1$ are topologically equivalent.   
Using SINGULAR \cite{Singular}, we obtain 
$\tau_{gen}(f,g)=39$, whereas $\tau_{gen}(f_1,g_1)=38.$

\end{example}
\begin{remark}
    Example \ref{ex1} shows that the generic Tjurina number of a pair of holomorphic functions is not a topological invariant.
\end{remark}

\section{Tjurina number of a foliation induced by a pencil along its bifurcation and generic fibers}

The aim of this section is to establish a formula relating the Tjurina numbers of the foliation induced by the pair $(f,g)$ along the balanced divisor of separatrices, using the description of the latter  given in \cite[Lemma 4.3]{FFMR}. 

Before proceeding, we state some key results that will be used in this section.
 
\par Let $\F$ be a germ of a foliation on $(\C^2,0)$ and let $C=\{f=f_1\cdots f_k=0\}$ be a germ of a reduced curve invariant by $\F$. We will use the following formulas, proved in 
\cite[Proposition 6.2]{FS-GB-SM1} and \cite[Proposition 3.2]{H-H}, respectively:
\begin{eqnarray}
\tau(\F,C)&=&\tau(C)+GSV(\F,C), \label{eq_gsv}\\ 
\tau(f_1\cdots  f_k)&=&\displaystyle\sum_{j=1}^{k}\tau(f_j)+\sum_{1\leq i<j\leq k}I_0(f_i,f_j)+\Theta_{f_1f_2\cdots f_k}. \label{tuj_eq}
\end{eqnarray}
    
From now on, we will consider the foliation $\F_{\omega_{f/g}}$ defined by the $1$-form $\omega_{f/g}=gdf-fdg$, and assume that $\F_{\omega_{f/g}}$ has an isolated singularity at the origin. Observe that $\F_{\omega_{f/g}}$ is the natural foliation associated with the pencil generated by $f$ and $g$. Let $h_1,\ldots, h_r$ denote the {\it bifurcation fibers} of the pencil $\mathcal{P}_{f,g}:\alpha f+\beta g$. They correspond  to the elements of the set $\B(f,g)$ defined Section \ref{sec: Tjurina-pair-fiber}. Assume that $f, g$ and $\varphi_1,\ldots ,\varphi_r$ are all generic fibers. By \cite[Lemma 4.3]{FFMR}, the foliation $\F_{\omega_{f/g}}$ admits a balanced divisor of separatrices of the form: \[\B=(h_1)+ \cdots +(h_r)+(f)+(g)-(\varphi_1)- \cdots -(\varphi_r),\]
where $h_i=\alpha_i f+ \beta_i g$,  
with $\alpha_i,\beta_i\in\C$ for $1\leq i\leq r$, and
$\varphi_j=\alpha_j f+\beta_j g$, 
$\alpha_j,\beta_j\in\C$ \; for  \; $1\leq j\leq r$. Observe that every balanced divisor of separatrices of the foliation $\mathcal{F}_{\omega_{f/g}}$ is of this form. Indeed, all such divisors have the same zero divisor, while the only freedom lies in the choice of the generic separatrices appearing in the pole divisor. This follows from the fact that the leaves of $\mathcal{F}_{\omega_{f/g}}$ are precisely the connected components of the fibers of the pencil generated by $f$ and $g$. Therefore, balanced divisors of separatrices are constructed in this way, according to Genzmer's definition \cite[Definition~2.1]{Genzmer}.
\par Note that in this situation we have that 
\begin{equation}\label{eq_sum1}
    \sum_{1\leq i<j\leq r}I_0(h_i,h_j)=\sum_{1\leq i<j\leq r}I_0(\varphi_i,\varphi_j)=\frac{r(r-1)}{2}.
\end{equation}
We get the following lemma.
\begin{lemma}\label{lemma_tjurina}
   Let $\F_{\omega_{f/g}}$ be a germ of a foliation at $(\C^2,0)$ induced by $\omega_{f/g}$. Then 
\[\tau(\F_{\omega_{f/g}},h_1\cdots h_r)=\tau(h_1\cdots h_r)-r(r-2)I_0(f,g)\]
and 
\[\mu(\F_{\omega_{f/g}},h_1\cdots h_r)=\mu(h_1\cdots h_r)-r(r-2)I_0(f,g).\] 
\end{lemma}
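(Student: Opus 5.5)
The plan is to reduce both identities to a computation of the GSV-index of $\F_{\omega_{f/g}}$ along the reduced curve $H:=h_1\cdots h_r=0$. First, $H$ is reduced and $\F_{\omega_{f/g}}$-invariant. Indeed, each $h_i$ is a fiber of the pencil, hence reduced (by \cite[Lemma 4.1]{FFMR}) and invariant. Moreover, distinct fibers $h_i=\alpha_i f+\beta_i g$ and $h_j=\alpha_j f+\beta_j g$ with $[\alpha_i:\beta_i]\neq[\alpha_j:\beta_j]$ have no common component, since the determinant $\alpha_i\beta_j-\alpha_j\beta_i\neq 0$ and $f,g$ are coprime. Hence formula \eqref{eq_gsv} applies and gives
\[
\tau(\F_{\omega_{f/g}},H)=\tau(H)+GSV(\F_{\omega_{f/g}},H).
\]
Likewise \cite[Proposition 4.1]{FS-GB-SM2}, the identity used in \eqref{ec2}, gives $\mu(\F_{\omega_{f/g}},H)=\mu(H)+GSV(\F_{\omega_{f/g}},H)$. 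So both statements follow once we show that $GSV(\F_{\omega_{f/g}},H)=-r(r-2)I_0(f,g)$.

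To compute this index, I would use the classical additivity formula for the GSV-index (Brunella): for invariant reduced curves $C_1,C_2$ without common components,
\[
GSV(\F,C_1\cup C_2)=GSV(\F,C_1)+GSV(\F,C_2)-2I_0(C_1,C_2).
\]
By induction on $r$ this yields
\[
GSV(\F,H)=\sum_{i=1}^r GSV(\F,h_i)-2\sum_{1\le i<j\le r}I_0(h_i,h_j).
\]
By \cite[Proposition 4.7]{FFMR}, as in the proof of Proposition \ref{Tju}, each term satisfies $GSV(\F_{\omega_{f/g}},h_i)=I_0(f,g)$. For the intersection terms, the invertible linear change relating $(h_i,h_j)$ and $(f,g)$ shows that the ideals $(h_i,h_j)$ and $(f,g)$ coincide, so $I_0(h_i,h_j)=I_0(f,g)$. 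This is how \eqref{eq_sum1} should be read: the sum equals $\frac{r(r-1)}{2}I_0(f,g)$.

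Substituting, we get
\[
GSV(\F_{\omega_{f/g}},H)=rI_0(f,g)-r(r-1)I_0(f,g)=-r(r-2)I_0(f,g).
\]
Plugging this into the two formulas above gives both identities of the lemma. The only delicate point is justifying the additivity formula for the GSV-index, including the correct sign and factor $2$ of the intersection term. I would cite it from Brunella's work or the GSV literature rather than reprove it; if needed, it can be checked directly from the definition via the decomposition $g\,\omega=h\,\eta+k\,dH$ applied to a product of separatrices.
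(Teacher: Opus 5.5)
Your proposal is correct and follows the paper's proof essentially verbatim. Both arguments reduce the two identities, via $\tau(\F,C)=\tau(C)+GSV(\F,C)$ and $\mu(\F,C)=\mu(C)+GSV(\F,C)$, to computing $GSV(\F_{\omega_{f/g}},h_1\cdots h_r)$ by Brunella's branch formula, using $GSV(\F_{\omega_{f/g}},h_i)=I_0(f,g)$ and $I_0(h_i,h_j)=I_0(f,g)$. Your observation that $(h_i,h_j)=(f,g)$ as ideals also supplies the factor $I_0(f,g)$ that is missing from the right-hand side of \eqref{eq_sum1} as printed.
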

\begin{proof} 
Applying \eqref{eq_gsv} to $\F_{\omega_{f/g}}$ and $C=h_1\cdots h_r$, we obtain
\begin{eqnarray}\label{lemma1}
    \tau(\F_{\omega_{f/g}},h_1\cdots h_r)=\tau(h_1\cdots h_r)+GSV(\F_{\omega_{f/g}},h_1\cdots h_r).
\end{eqnarray}
Now, using the branch formula for GSV \cite[pp. 29]{Brunella-book}  and \eqref{eq_sum1} we obtain
\begin{eqnarray}\label{eq_gsv3}
    GSV(\F_{\omega_{f/g}},h_1\cdots h_r)&=&\sum_{i=1}^{r}GSV(\F_{\omega_{f/g}},h_i)-2\sum_{1\leq i<j \leq r}I_0(h_i,h_j)\\\nonumber
    &=&rI_0(f,g)-r(r-1)I_0(f,g)\\\nonumber
    &=& -r(r-2) I_0(f,g).
\end{eqnarray}
Substituting \eqref{eq_gsv3} into \eqref{lemma1}, we obtain 
$\tau(\F_{\omega_{f/g}},h_1\cdots h_r)=\tau(h_1\cdots h_r)-r(r-2)I_0(f,g).$
On the other hand, using \cite[Proposition 4.1]{FS-GB-SM2}, we have that 
\[\mu(\F_{\omega_{f/g}},h_1\cdots h_r)=\mu(h_1\cdots h_r)+GSV(\F_{\omega_{f/g}},h_1\cdots h_r).\]
The proof ends by substituting equation \eqref{eq_gsv3} into the previous formula. 
\end{proof}
\begin{corollary}\label{coroo}
Let $\F_{\omega_{f/g}}$ be a germ of a foliation at $(\C^2,0)$ induced by $\omega_{f/g}$. Then
\[\tau(\F_{\omega_{f/g}},h_1\cdots h_r)=\sum_{j=1}^{r}\tau(h_j)-\frac{r(r-3)}{2}I_0(f,g)+\Theta_{h_1\cdots h_r}.\]
\end{corollary}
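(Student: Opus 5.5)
The plan is to combine Lemma \ref{lemma_tjurina} with the Tjurina formula \eqref{tuj_eq} for products of branches. By Lemma \ref{lemma_tjurina},
\[
\tau(\F_{\omega_{f/g}},h_1\cdots h_r)=\tau(h_1\cdots h_r)-r(r-2)I_0(f,g).
\]
Each bifurcation fiber $h_j$ is reduced, since all fibers of the pencil are reduced (\cite[Lemma 4.1]{FFMR}). Moreover, the $h_j$ are pairwise coprime, because distinct members of the pencil share no common component ($f,g$ are relatively prime). So we may apply \eqref{tuj_eq} with $f_j=h_j$, treating each $h_j$ as one factor rather than splitting it into irreducible branches; the formula of \cite[Proposition 3.2]{H-H} is stated for a product of reduced pairwise coprime factors with the correction term $\Theta_{h_1\cdots h_r}$. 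This yields
\[
\tau(h_1\cdots h_r)=\sum_{j=1}^r\tau(h_j)+\sum_{1\le i<j\le r}I_0(h_i,h_j)+\Theta_{h_1\cdots h_r}.
\]

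Next I evaluate the cross terms. For $i\neq j$, the pair $h_i,h_j$ generates the same ideal as $(f,g)$, since the matrix of coefficients $(\alpha_i,\beta_i),(\alpha_j,\beta_j)$ is invertible. Hence $I_0(h_i,h_j)=I_0(f,g)$. Note that the displayed identity \eqref{eq_sum1} in the paper should be read with the factor $I_0(f,g)$, as it is actually used in the proof of Lemma \ref{lemma_tjurina}. The sum therefore equals $\frac{r(r-1)}{2}I_0(f,g)$.

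Substituting, the coefficient of $I_0(f,g)$ is
\[
\frac{r(r-1)}{2}-r(r-2)=\frac{r^2-r-2r^2+4r}{2}=-\frac{r(r-3)}{2},
\]
which gives the stated formula. The only step needing care is the justification that \eqref{tuj_eq} applies with the reduced (possibly reducible) fibers $h_j$ as factors, i.e.\ that $\Theta$ is defined relative to this decomposition. If it is only stated for irreducible branches, I would instead expand each $h_j$ into branches and regroup, using \eqref{tuj_eq} for each $h_j$ separately to absorb its internal intersection numbers and its own $\Theta$ into $\tau(h_j)$. This leaves precisely the inter-fiber terms, with $\Theta_{h_1\cdots h_r}$ then interpreted as the resulting difference.
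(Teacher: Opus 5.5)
Your proposal is correct and is essentially the paper's proof. You apply Lemma \ref{lemma_tjurina}, expand $\tau(h_1\cdots h_r)$ via \eqref{tuj_eq} with the fibers $h_j$ as factors, and use $I_0(h_i,h_j)=I_0(f,g)$ to get $\frac{r(r-1)}{2}I_0(f,g)-r(r-2)I_0(f,g)=-\frac{r(r-3)}{2}I_0(f,g)$; your reading of \eqref{eq_sum1} with the factor $I_0(f,g)$ is exactly how the paper uses it in this computation.
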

\begin{proof}
Using \eqref{tuj_eq}, \eqref{eq_sum1} and Lemma \ref{lemma_tjurina}, we have
\[\begin{array}{lll}
\tau(\F_{\omega_{f/g}},h_1\cdots h_r)&=&\displaystyle\sum_{j=1}^{r}\tau(h_j)+\sum_{1\le i<j\le r} I_0(h_i,h_j)+\Theta_{h_1\cdots h_r}-r(r-2)I_0(f,g)\\
&=&\displaystyle\sum_{j=1}^{r}\tau(h_j)+\frac{r(r-1)}{2} I_0(f,g)+\Theta_{h_1\cdots h_r}-r(r-2)I_0(f,g)\\
&=& \displaystyle\sum_{j=1}^{r}\tau(h_j)-\frac{r(r-3)}{2}I_0(f,g)+\Theta_{h_1\cdots h_r}.
\end{array}
\]
Hence, the corollary follows.
\end{proof}

\begin{lemma}\label{lemma_tjurina par}
Let $\F_{\omega_{f/g}}$ be a germ of a foliation at $(\C^2,0)$ induced by $\omega_{f/g}$. Then 
\[
\tau(fg h_1\cdots h_r)-\tau(\varphi_1\cdots \varphi_r)=\tau(fg)+\sum_{j=1}^ r (\tau(h_j)-\tau(\varphi_j))+2r\cdot I_0(f,g)+\Theta_{\B_{0}}+\Theta_{h_1\cdots h_r}-\Theta_{\B_{\infty}},  
\]
\noindent where $\B_0=fg h_1 \cdots h_r$ and $\B_\infty=\varphi_1 \cdots \varphi_r$.
\end{lemma}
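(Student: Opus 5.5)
Apply formula \eqref{tuj_eq} to the zero and pole parts of the balanced divisor and subtract. All intersection multiplicities come from the fact that any two distinct members of the pencil $\mathcal P_{f,g}$ meet with multiplicity $I_0(f,g)$. This holds because $\alpha f+\beta g$ and $\alpha' f+\beta' g$, with $[\alpha:\beta]\neq[\alpha':\beta']$, generate the same ideal as $f,g$. This is also what underlies \eqref{eq_sum1}, which should be read as $\frac{r(r-1)}{2}I_0(f,g)$, as it is used in the proof of Corollary \ref{coroo}.

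First, for the pole divisor, I would apply \eqref{tuj_eq} to the reduced curve $\varphi_1\cdots\varphi_r$:
\[
\tau(\varphi_1\cdots\varphi_r)=\sum_{j=1}^r\tau(\varphi_j)+\frac{r(r-1)}{2}I_0(f,g)+\Theta_{\B_\infty}.
\]
For the zero divisor $\B_0=fgh_1\cdots h_r$, I would apply \eqref{tuj_eq} with the $r+2$ factors $f,g,h_1,\ldots,h_r$. These are pairwise distinct members of the pencil, since the $h_i$ are bifurcation fibers while $f$ and $g$ are generic. Hence each of the $\binom{r+2}{2}$ pairs contributes $I_0(f,g)$:
\[
\tau(\B_0)=\tau(f)+\tau(g)+\sum_{j=1}^r\tau(h_j)+\frac{(r+2)(r+1)}{2}I_0(f,g)+\Theta_{\B_0}.
\]

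Next, I would replace $\tau(f)+\tau(g)$ by $\tau(fg)$. Applying \eqref{tuj_eq} to the two factors $f,g$ gives $\tau(fg)=\tau(f)+\tau(g)+I_0(f,g)+\Theta_{fg}$. Substituting and subtracting the pole expression, the coefficient of $I_0(f,g)$ becomes
\[
\frac{(r+2)(r+1)}{2}-1-\frac{r(r-1)}{2}=2r,
\]
which matches the statement. What remains on the $\Theta$ side is $\Theta_{\B_0}-\Theta_{fg}-\Theta_{\B_\infty}$.

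The main obstacle is reconciling these $\Theta$ terms with the stated $\Theta_{\B_0}+\Theta_{h_1\cdots h_r}-\Theta_{\B_\infty}$, which means understanding what $\Theta$ is in \cite{H-H}. Since $\Theta_{fg}$ does not appear in the statement, I would first try to show that $\Theta_{fg}=0$ for two generic members of the pencil. Then I would try to show that the $\Theta_{h_1\cdots h_r}$ term arises from grouping: apply \eqref{tuj_eq} to the factorization $\B_0=(fg)\cdot(h_1\cdots h_r)$, and then expand $\tau(h_1\cdots h_r)$ by \eqref{tuj_eq}, or equivalently via Corollary \ref{coroo} together with Lemma \ref{lemma_tjurina}. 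Both routes would require checking the exact convention of $\Theta$ in \cite[Proposition 3.2]{H-H}, in particular whether $\Theta$ is defined relative to a chosen decomposition into factors. If the grouped decomposition is used for $\B_0$, the intersection count changes to $I_0(fg,h_1\cdots h_r)=2rI_0(f,g)$. Combined with $I_0(f,g)$ from $\tau(fg)$ and $\frac{r(r-1)}2I_0(f,g)$ from expanding $\tau(h_1\cdots h_r)$, this again yields net $2r$ after subtracting the pole term, with $\Theta_{\B_0}$ (for the two-factor grouping) and $\Theta_{h_1\cdots h_r}$ appearing exactly as in the statement. This grouped route is the one I expect the intended proof to follow.
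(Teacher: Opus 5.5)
Your grouped route is exactly the paper's proof. The paper applies \eqref{tuj_eq} to the two-factor decomposition $(fg)\cdot(h_1\cdots h_r)$ and expands $\tau(h_1\cdots h_r)$ by \eqref{tuj_eq}. It then uses $I_0(f,h_1\cdots h_r)=I_0(g,h_1\cdots h_r)=rI_0(f,g)$ and \eqref{eq_sum1} (read with the factor $I_0(f,g)$, as you note), and subtracts the expansion of $\tau(\varphi_1\cdots\varphi_r)$. So the $\Theta_{\B_0}$ in the statement is the term attached to that grouping, and $\tau(fg)$ is never expanded, which means no $I_0(f,g)$ is extracted from it.

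Drop the detour through $\Theta_{fg}=0$: it is unnecessary and false in general. For instance, in Example~\ref{ex_bifur} the generic fibers $f,g$ are $D_4$ singularities with $\tau(f)=\tau(g)=4$, $I_0(f,g)=9$ and $\tau(fg)=24$, whence $\Theta_{fg}=7$.
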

\begin{proof}
Applying \eqref{tuj_eq} to the curves
$fgh_1\cdots h_r=0$ and $h_1\cdots h_r=0$, respectively, we obtain

\begin{equation}\label{eccua23}
\tau(fgh_1\ldots h_r)=\tau(fg)+\tau(h_1\cdots h_r)+I_0(fg,h_1\cdots h_r)+\Theta_{fgh_1\cdots h_r}
\end{equation}

\noindent and

\begin{eqnarray}\label{eccua24}
\tau(fgh_1\cdots h_r)&=&\tau(fg)+\sum_{i=1}^{r}\tau(h_i)+\sum_{1\leq i<j\leq r}I_0(h_i,h_j)+I_0(f,h_1\cdots h_r)+I_0(g,h_1\cdots h_r)\nonumber\\
&+ &\Theta_{h_1\cdots h_r}+\Theta_{fgh_1\cdots h_r}. 
\end{eqnarray}

\noindent Note that
\begin{equation*}
I_0(f,h_1\cdots h_r)=\sum_{i=1}^{r}I_0(f,h_i)=\sum_{i=1}^{r}I_0(f,g)=rI_0(f,g)
\end{equation*}

\noindent and
\begin{equation*}
I_0(g,h_1\cdots h_r)=\sum_{i=1}^{r}I_0(g,h_i)=\sum_{i=1}^{r}I_0(g,f)=rI_0(f,g).
\end{equation*}
By \eqref{eq_sum1} and substituting the above equalities into \eqref{eccua24}, we obtain
\begin{eqnarray}\label{ecc26}
\tau(fgh_1\cdots h_r)&=&\tau(fg)+\sum_{i=1}^{r}\tau(h_i)+\frac{r(r+3)}{2}\, I_0(f,g)\\
&+&\Theta_{h_1\cdots h_r}+\Theta_{fgh_1\cdots h_r}.\nonumber
\end{eqnarray}
 On the other hand, by applying \eqref{tuj_eq} to $\varphi_1\cdots \varphi_r=0$, we obtain
\begin{eqnarray}\label{ecua247}
\tau(\varphi_1\cdots \varphi_r)&=&\displaystyle\sum_{i=1}^{r}\tau(\varphi_i)+\frac{r(r-1)}{2}I_0(\varphi_i,\varphi_j)+\Theta_{\varphi_1\cdots \varphi_r}\\
&=& \displaystyle\sum_{i=1}^{r}\tau(\varphi_i)+\frac{r(r-1)}{2}I_0(f,g)+\Theta_{\varphi_1\cdots \varphi_r}.\nonumber
\end{eqnarray}
By subtracting \eqref{ecc26} and \eqref{ecua247}, 
\[\tau(fg h_1 \cdots h_r)-\tau(\varphi_{1}\cdots \varphi_{r})=\tau(fg)+\sum_{j=1}^{r}(\tau(h_j)-\tau(\varphi_j))+ 2r I_0(f,g)+\Theta_{\B_{0}}+\Theta_{h_1\cdots h_r} - \Theta_{\B_{\infty}},
\]
we obtain the desired formula.
\end{proof}

With all above ingredients we can state the main result of this section.
\begin{theorem}\label{teo2}
Let $\F_{\omega_{f/g}}$ be a germ of a foliation at $(\C^2,0)$ induced by $\omega_{f/g}$. Then
\[\tau(\F_{\omega_{f/g}},\B_0)-\tau(\F_{\omega_{f/g}},\B_{\infty})=\tau(fg)+\sum_{j=1}^ r (\tau(h_j)-\tau(\varphi_j))-2r I_0(f,g)+\Theta_{\B_{0}}+\Theta_{h_1 \cdots h_r}-\Theta_{\B_{\infty}},\]
where $\B_0=fg h_1 \cdots h_r$ and $\B_\infty=\varphi_1 \cdots \varphi_r$.  
\end{theorem}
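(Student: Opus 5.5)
Using \eqref{eq_gsv}, I reduce the statement to Lemma~\ref{lemma_tjurina par} plus a computation of two GSV-indices. Applying \eqref{eq_gsv} to the $\F_{\omega_{f/g}}$-invariant reduced curves $\B_0$ and $\B_\infty$ gives
\[
\tau(\F_{\omega_{f/g}},\B_0)-\tau(\F_{\omega_{f/g}},\B_\infty)=\tau(\B_0)-\tau(\B_\infty)+GSV(\F_{\omega_{f/g}},\B_0)-GSV(\F_{\omega_{f/g}},\B_\infty).
\]
Lemma~\ref{lemma_tjurina par} supplies $\tau(\B_0)-\tau(\B_\infty)$. It agrees with the claimed right-hand side except that it has $+2rI_0(f,g)$ where the theorem has $-2rI_0(f,g)$. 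So the whole proof comes down to showing
\[
GSV(\F_{\omega_{f/g}},\B_0)-GSV(\F_{\omega_{f/g}},\B_\infty)=-4r\,I_0(f,g).
\]

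To compute the GSV-indices I use the branch formula for GSV, as in the proof of Lemma~\ref{lemma_tjurina}, together with $GSV(\F_{\omega_{f/g}},C)=I_0(f,g)$ for every fiber $C$ of the pencil \cite[Proposition 4.7]{FFMR}. I also need that any two distinct fibers meet with multiplicity $I_0(f,g)$. The curve $\B_\infty$ consists of $r$ distinct fibers, so
\[
GSV(\F_{\omega_{f/g}},\B_\infty)=rI_0(f,g)-r(r-1)I_0(f,g)=-r(r-2)I_0(f,g).
\]
The curve $\B_0$ consists of the $r+2$ distinct fibers $f,g,h_1,\dots,h_r$, so
\[
GSV(\F_{\omega_{f/g}},\B_0)=(r+2)I_0(f,g)-(r+2)(r+1)I_0(f,g)=-r(r+2)I_0(f,g).
\]
Equivalently, one can start from Lemma~\ref{lemma_tjurina} and add the contributions of $f$ and $g$: namely $GSV(f)+GSV(g)=2I_0(f,g)$, minus $2I_0(f,g)$ for the pair $f,g$, minus $2\cdot 2rI_0(f,g)$ for the intersections of $f$ and $g$ with $h_1\cdots h_r$. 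The difference of the two indices is $-r(r+2)I_0(f,g)+r(r-2)I_0(f,g)=-4rI_0(f,g)$. Adding this to the formula of Lemma~\ref{lemma_tjurina par} turns $+2rI_0(f,g)$ into $-2rI_0(f,g)$, which is exactly the statement.

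The step I expect to need the most care is the bookkeeping of pairwise intersection multiplicities in the branch formula. One has to check that every pair of distinct fibers among $f,g,h_i,\varphi_j$ meets with multiplicity $I_0(f,g)$. This holds because $I_0(\alpha f+\beta g,\gamma f+\delta g)=I_0(f,g)$ whenever $\alpha\delta-\beta\gamma\neq0$, since both pairs generate the same ideal. One also has to use the GSV branch formula with the factor $2$ on the intersection terms, consistently with \eqref{eq_gsv3}.
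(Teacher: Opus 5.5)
Your proposal is correct and is essentially the paper's proof. The paper likewise writes $\tau(\F_{\omega_{f/g}},\B_0)=\tau(\B_0)-r(r+2)I_0(f,g)$ and $\tau(\F_{\omega_{f/g}},\B_\infty)=\tau(\B_\infty)-r(r-2)I_0(f,g)$, subtracts these to get the $-4rI_0(f,g)$ correction, and concludes with Lemma~\ref{lemma_tjurina par}. The paper obtains both identities by citing the GSV computation of Lemma~\ref{lemma_tjurina}; for $\B_0$ that computation has to be run on the $r+2$ fibers $f,g,h_1,\dots,h_r$, which is exactly what you do explicitly, together with your check that distinct fibers meet with multiplicity $I_0(f,g)$.
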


\begin{proof} By Lemma \ref{lemma_tjurina}, we have
\begin{equation}\label{equa1}
    \tau(\F_{\omega_{f/g}},fg h_1\cdots h_r)=\tau(fg h_1\cdots h_r)-r(r+2)I_0(f,g)
\end{equation}
\noindent and
\begin{equation}\label{equa2}
    \tau(\F_{\omega_{f/g}},\varphi_1\cdots \varphi_r)=\tau(\varphi_1\cdots \varphi_r)-r(r-2)I_0(f,g).
\end{equation}
Using \eqref{equa1} and \eqref{equa2}, we get
\begin{equation}
\tau(\F_{\omega_{f/g}},\B_0)-\tau(\F_{\omega_{f/g}},\B_{\infty})= \tau(fg h_1\cdots h_r)- \tau(\varphi_1\cdots \varphi_r)-4rI_0(f,g).
\end{equation}
Applying Lemma \ref{lemma_tjurina par}, we obtain
\[\tau(\F_{\omega_{f/g}},\B_0)-\tau(\F_{\omega_{f/g}},\B_{\infty})=\tau(fg)+\sum_{j=1}^ r (\tau(h_j)-\tau(\varphi_j))-2r I_0(f,g)+\Theta_{\B_{0}}+\Theta_{h_1 \cdots h_r}-\Theta_{\B_{\infty}}.\]
\end{proof}

\section{The Tjurina number of pairs: a bifurcation formula}\label{biju}

\par Recall that $\mathbb{P}^1$ denotes the complex projective line. It is well-known (see \cite{Sz1}) that there exists a finite subset $\mathcal B(f,g)\subset \mathbb P^1$ such that the pencil members have a strictly larger Milnor number than the remaining pencil member who all have the same Milnor number. This defines the \textit{generic Milnor number} $\mu_{gen}:=\min_{[\alpha:\beta]\in\mathbb{P}^1}\mu(C_{[\alpha:\beta]})$
and for every $[\alpha:\beta]\in \B(f,g)$, whereas $\mu_{gen}< \mu(C_{[\alpha:\beta]})\leq + \infty$.
The curves $C_{[\alpha:\beta]}$, where $[\alpha:\beta]\in\B(f,g)$ are called \textit{bifurcation fibers} of the pencil $\mathcal{P}_{f,g}$.

\par In \cite{FFMR}, the authors solve a conjecture posed by A. Szawlowski in \cite{Sz1}, concerning pencils of plane holomorphic functions germs. 
\begin{theorem}[Bifurcation Formula \cite{FFMR}]\label{th bif}
Let $f,g \in \mathcal O_2$ be relatively prime and assume that both vanish at the origin. Then
\[
\mu(f,g)=\mu_0(fg)+\sum_{[\alpha:\beta]\in \mathcal{B}(f,g)^{*}}(\mu_{0}(C_{[\alpha:\beta]})-\mu_{gen}),
\]
 where $\mathcal{B}(f,g)^{*}=\mathcal{B}(f,g)\backslash \{0,\infty\}$.
 \end{theorem}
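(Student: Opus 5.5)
The plan is to compute $\mu(f,g)=\mu(\F_{\omega_{f/g}})$ through a balanced divisor of separatrices, as in Section~3. I would use Genzmer's formula for generalized curve foliations, which expresses the Milnor number of the foliation through the Milnor number of a balanced divisor. By the Remark in Section~2, $\F_{\omega_{f/g}}$ is a generalized curve foliation. By \cite[Lemma 4.3]{FFMR} it admits the balanced divisor
\[
\B=(h_1)+\cdots+(h_r)+(f)+(g)-(\varphi_1)-\cdots-(\varphi_r).
\]
Genzmer's formula is the dicritical analogue of the equality $\mu(\F)=\mu(S)$ valid for non-dicritical generalized curves. It should give $\mu(\F_{\omega_{f/g}})=\mu(\B)$, where for a divisor $\sum a_iB_i$ one sets
\[
\mu\Big(\sum a_iB_i\Big)=\sum a_i\mu(B_i)+2\sum_{i<j}a_ia_jI_0(B_i,B_j)-\sum a_i+1 .
\]
This extends linearly the classical identity $\mu(fg)=\mu(f)+\mu(g)+2I_0(f,g)-1$ recalled in Section~2.

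The key steps, in order, are as follows.
\begin{enumerate}
\item Any two distinct members of the pencil meet with the same multiplicity $I_0(f,g)$. This is the content of \eqref{eq_sum1}, together with $I_0(f,h_i)=I_0(g,h_i)=I_0(f,g)$ as used in Lemma~\ref{lemma_tjurina par}.
\item Since the $\varphi_j$ are generic fibers, $\mu(\varphi_j)=\mu_{gen}$ for all $j$.
\item Substituting these values into the divisor formula, the positive part contributes $\mu(fg)+\sum_i\mu(h_i)$ plus cross terms that are multiples of $I_0(f,g)$. The negative part contributes $-r\mu_{gen}$ plus its own cross terms.
\item One then checks that all multiples of $I_0(f,g)$ cancel. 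The coefficient is $2\big[\binom{r}{2}+2r+\binom{r}{2}-r(r+2)\big]=0$, counting the pairs $h_ih_j$, the pairs $fh_i,gh_i$, the pairs $\varphi_i\varphi_j$, and the mixed pairs with sign $-1$. The pair $fg$ itself is absorbed into $\mu(fg)$.
\item The linear terms $-\sum a_i+1$ also match, because $\sum a_i=2$ for this divisor. This produces
\[
\mu(f,g)=\mu(fg)+\sum_{i=1}^r\big(\mu(h_i)-\mu_{gen}\big).
\]
\end{enumerate}

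It remains to treat the cases where $f$ or $g$ is itself a bifurcation fiber, which are the points $0,\infty\in\B(f,g)$. In those cases $f$ and $g$ already appear in the zero divisor, and the balanced divisor must be taken without the corresponding extra pole. This is why the sum runs only over $\B(f,g)^*$. I would redo the bookkeeping of the balanced divisor in these two cases. The count of poles then equals the number of bifurcation fibers different from $f$ and $g$, and the same cancellation goes through.

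The main obstacle is the identity $\mu(\F)=\mu(\B)$ for dicritical generalized curves, in the precise normalization that fits the divisor formula above. If citing Genzmer in exactly this form proves delicate, my fallback is the route already used in Lemma~\ref{lemma_tjurina}. That route combines the Milnor number of the foliation along each fiber (Corollary~\ref{mu}) with the GSV branch formula, and then passes to a global count via the Camacho--Lins Neto--Sad index theory on the reduction of singularities. A further alternative is a conservation-of-number argument: deform the pair $(f,g)$ to a pencil with only generic fibers other than $f$ and $g$, and track how the singularities of the foliation split off along the bifurcation fibers.
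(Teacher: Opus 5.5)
There is a genuine gap, and it sits exactly at the step you flagged as the main obstacle. (The paper does not reprove this statement; it imports it from \cite{FFMR}.) The identity $\mu(\F)=\mu(\B)$, with $\mu(\B)$ defined by bilinearly extending $\mu(fg)=\mu(f)+\mu(g)+2I_0(f,g)-1$, is false for this dicritical foliation. Step 4 hides this behind an arithmetic slip:
\[
\binom{r}{2}+2r+\binom{r}{2}-r(r+2)=r(r-1)+2r-r^2-2r=-r\neq 0 .
\]
So your divisor formula actually gives $\mu(f,g)=\mu(fg)+\sum_i(\mu(h_i)-\mu_{gen})-2r\,I_0(f,g)$. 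This is wrong as soon as there is a bifurcation fiber. For $f=xy+y^2+x^3$, $g=xy$ (the example of Section 6), one has $\mu(fg)=11$, $r=1$, $\mu(h)-\mu_{gen}=1$ and $I_0(f,g)=5$. Your formula predicts $2$, whereas $\mu(f,g)=12$. Changing the constant in front of the cross term cannot help either. Since $\sum a_i=2$ and $\sum a_i^2=2r+2$, one has $\sum_{i<j}a_ia_j=1-r$. But you need a contribution of exactly $2I_0(f,g)$, independent of $r$, to assemble $\mu(fg)$ out of $\mu(f)+\mu(g)-1$.

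What holds for generalized curve foliations is the formula in the normalization of \cite{FS-GB-SM2}, which the paper itself uses in Section 7: $\mu(\F)=\mu(\F,\B)=\sum_B a_B\,\mu(\F,B)-\sum_B a_B+1$, with $\mu(\F,B)=\mu(B)+GSV(\F,B)$. Here the interaction between members of $\B$ is encoded by their GSV indices, not by their mutual intersection numbers. In this case $\sum_B a_B\,GSV(\F,B)=2I_0(f,g)$, while $2\sum_{i<j}a_ia_jI_0(B_i,B_j)=2(1-r)I_0(f,g)$. The two agree in the non-dicritical case $\B=S$, where $GSV(\F,S)=0$, but here they differ by $2r\,I_0(f,g)$.

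With the correct formula the argument is immediate. By Corollary~\ref{mu}, equivalently $GSV(\F_{\omega_{f/g}},C_{[\alpha:\beta]})=I_0(f,g)$ for every member \cite[Proposition 4.7]{FFMR}, each member contributes $\mu(C)+I_0(f,g)$. Since $\sum a_B=2$, you get $\mu(f,g)=\mu(f)+\mu(g)+2I_0(f,g)-1+\sum_i(\mu(h_i)-\mu_{gen})$, which is the claim. Your treatment of $0,\infty\in\B(f,g)$ then goes through unchanged. So your fallback is the right route, but it needs precisely this divisor formula for $\mu(\F)$. The GSV branch formula together with a general appeal to Camacho--Lins Neto--Sad index theory does not supply it.
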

Note that the Bifurcation Formula given in Theorem \ref{th bif} implies that $\mu(f,g)$ is a topological invariant of the pair $(f,g)$.
\par This result provides the motivation for this section, where we introduce the Tjurina number of a pair of holomorphic functions and establish a bifurcation formula for this invariant. To this end, we need some preliminary concepts and results.

\begin{definition}\label{def:Tjurina-general}

Let $\mathcal{F}:\omega=0$ be a holomorphic foliation, where $\omega=P(x,y)dx+Q(x,y)dy$ and let $\B=\B_0-\B_{\infty}$ be a primitive balanced divisor of separatrices for $\F$. 

We define the {\it Tjurina number of the foliation $\mathcal{F}$ with respect to $\mathcal{B}$} as follows:
\begin{equation}\label{defi_Tjurina}
    \tau(\F,\B):=\tau(\F,\B_0)-\tau(\F,\B_{\infty})+I_0(\B_0,\B_{\infty}).
\end{equation}

\end{definition}

\noindent Observe that, when $\mathcal B$ is an effective divisor, that is $\B_{\infty}=\emptyset$, Definition \ref {def:Tjurina-general} coincides with that of \eqref{def:Tj-fol-curve}. \\

\noindent Since 
  $\tau(C)$ and $GSV(\F,C)$ are analytic invariants of the curve $C$ and the foliation $\F$, respectively, and by \eqref{eq_gsv}$,\tau(\F,C)=\tau(C)+GSV(\F,C)$, it follows that $\tau(\F,C)$ is an analytic invariant. Consequently,
$\tau(\F,\B_0)$ and $\tau(\F,\B_{\infty})$ are analytic invariants,
and hence so is $\tau(\F,\B)$.\\

In this section, we study the notion $\tau(\F,\B)$  in the case where $\F=\F_{\omega_{f/g}}$.

\begin{lemma}\label{l:independent}
Let $\F_{\omega_{f/g}}$ be the holomorphic foliation defined by $\omega_{f/g}=gdf-fdg$, where $f,g\in\mathcal{O}_2$ are coprime and $\F_{\omega_{f/g}}$ has an isolated singularity at $0\in\C^2$. If $\B$ and $\bar{\B}$ are two primitive balanced divisors of separatrices for $\F_{\omega_{f/g}}$, then $\tau(\F_{\omega_{f/g}},\B)=\tau(\F_{\omega_{f/g}},\bar{\B})$.    
\end{lemma}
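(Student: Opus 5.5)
The plan is to reduce the statement to showing that the pole part contributes a quantity independent of the choice of generic fibers. As observed in Section 3, every primitive balanced divisor of separatrices of $\F_{\omega_{f/g}}$ has the form
\[
\B=(h_1)+\cdots+(h_r)+(f)+(g)-(\varphi_1)-\cdots-(\varphi_r),
\]
and the zero divisor $\B_0=fgh_1\cdots h_r$ is the same for all of them. So we may write $\B_\infty=\varphi_1\cdots\varphi_r$ and $\bar\B_\infty=\bar\varphi_1\cdots\bar\varphi_r$, where the $\varphi_j$ and the $\bar\varphi_j$ are two families of $r$ pairwise distinct generic fibers. In Definition \ref{def:Tjurina-general} the term $\tau(\F_{\omega_{f/g}},\B_0)$ is then common to both sides. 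It therefore suffices to prove that
\[
\tau(\F_{\omega_{f/g}},\B_\infty)-I_0(\B_0,\B_\infty)
\]
does not depend on the choice of the $\varphi_j$.

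The intersection term is handled directly. Any two distinct fibers of the pencil meet with multiplicity $I_0(f,g)$, as already used in \eqref{eq_sum1} and in the proof of Lemma \ref{lemma_tjurina par}. By additivity of intersection multiplicity,
\[
I_0(\B_0,\B_\infty)=\sum_{j=1}^r\Big(I_0(f,\varphi_j)+I_0(g,\varphi_j)+\sum_{i=1}^r I_0(h_i,\varphi_j)\Big)=r(r+2)\,I_0(f,g),
\]
which is independent of the choice. Next, Lemma \ref{lemma_tjurina} applied to the $r$ fibers $\varphi_j$ gives
\[
\tau(\F_{\omega_{f/g}},\B_\infty)=\tau(\varphi_1\cdots\varphi_r)-r(r-2)I_0(f,g).
\]
Combining this with \eqref{tuj_eq} and \eqref{eq_sum1} yields
\[
\tau(\F_{\omega_{f/g}},\B_\infty)=\sum_{j=1}^r\tau(\varphi_j)-\frac{r(r-3)}{2}I_0(f,g)+\Theta_{\varphi_1\cdots\varphi_r}.
\]
Since each $\varphi_j$ is a generic fiber, $[\alpha_j:\beta_j]\notin\B(f,g)$, and Section \ref{sec: Tjurina-pair-fiber} gives $\tau(\varphi_j)=\tau_{gen}$. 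Hence the sum contributes $r\,\tau_{gen}$, independently of the choice.

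The main obstacle is the correction term $\Theta_{\varphi_1\cdots\varphi_r}$, or equivalently $\tau(\varphi_1\cdots\varphi_r)$. I would attack it as a one-parameter family argument. Consider the connected parameter space $U$ of $r$-tuples of distinct points of $\mathbb P^1\setminus\B(f,g)$, and the family of reduced curves $\varphi_1\cdots\varphi_r$ over $U$. Along $U$ the topological type is constant: the Milnor number is
\[
\mu(\varphi_1\cdots\varphi_r)=r\mu_{gen}+r(r-1)I_0(f,g)-(r-1),
\]
which is constant, so the family is equisingular. By \cite[Theorem 2.6]{G-L-S} the Tjurina number is upper semicontinuous on $U$, so it attains its minimum on a Zariski-open dense subset. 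To upgrade this to constancy, I would first try the following. Work in the Artinian ring $A=\mathcal O_2/(P,Q)$, where $P,Q$ are the coefficients of $\omega_{f/g}$. There $g\,df\equiv f\,dg$, so the images of the fibers $\alpha f+\beta g$ behave uniformly in $[\alpha:\beta]$. The colength of the ideal generated by the image of $\prod_j(\alpha_jf+\beta_jg)$ should then depend only on whether the $[\alpha_j:\beta_j]$ avoid $\B(f,g)$. This can be tested by a linear change of generators of the pencil, $(f,g)\mapsto(af+bg,cf+dg)$, which multiplies $\omega_{f/g}$ by the unit $ad-bc$ and so preserves the ideal $(P,Q)$.

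If this direct argument fails, the fallback is to read ``generic fibers'' in the construction of $\B$ (following \cite[Lemma 4.3]{FFMR}) as those lying in the Zariski-open subset of $U$ where $\tau(\varphi_1\cdots\varphi_r)$ takes its minimal value. That value is then the common one for all admissible choices, which gives $\tau(\F_{\omega_{f/g}},\B)=\tau(\F_{\omega_{f/g}},\bar\B)$.
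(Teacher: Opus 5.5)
Your reduction is the same as the paper's. $\B_0$ is common to both divisors, and the intersection term equals $r(r+2)I_0(f,g)$ for every choice (a point the paper leaves implicit). Everything then rests on $\tau(\F_{\omega_{f/g}},\B_\infty)=\tau(\varphi_1\cdots\varphi_r)-r(r-2)I_0(f,g)$ being independent of the chosen fibers, and that is exactly the step your proposal does not prove.

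The Artinian-ring argument is only asserted (``should then depend''). The suggested test is vacuous: replacing $(f,g)$ by $(af+bg,cf+dg)$ only relabels the members of the pencil, so the ideal $(P,Q,\prod_j\varphi_j)$ is unchanged and two different choices of fibers are never compared. Upper semicontinuity gives constancy only off a proper Zariski-closed subset of $U$, and jumps really occur there.

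Already your step $\tau(\varphi_j)=\tau_{gen}$ conflates two different sets. One is the Milnor bifurcation set, which decides via \cite[Lemma 4.3]{FFMR} which fibers may be placed in $\B_\infty$; the other is the Tjurina jump set of Section 2. A concrete example: take the pencil spanned by $y^3+x^7$ and $x^5y+y^5$.
\begin{itemize}
\item All fibers are reduced.
\item The members $C_t: y^3+x^7+t(x^5y+y^5)$, $t\in\C$, are semi-quasihomogeneous with principal part $y^3+x^7$, so all have $\mu=12$.
\item The only bifurcation fiber is $y(x^5+y^4)$, with $\mu=21$, so $r=1$.
\item Nevertheless $\tau(C_0)=12$. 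For $t\neq 0$ the upper monomial $x^5y$ appears with coefficient $t$, so $C_t$ is a non-quasihomogeneous $E_{12}$ and $\tau(C_t)=11$.
\end{itemize}
Take $f=C_1$, $g=C_2$, $h_1=x^5y+y^5$, and compare $\varphi_1=C_3$ with $\bar\varphi_1=C_0$. The intersection term is the same for both, so Proposition \ref{Tju} shows that $\tau(\F_{\omega_{f/g}},\B)$ and $\tau(\F_{\omega_{f/g}},\bar\B)$ differ by $1$. Even if you read ``generic'' as ``outside the Tjurina jump set'', which fixes the terms $\tau(\varphi_j)$, the independence of $\Theta_{\varphi_1\cdots\varphi_r}$ for $r\geq 2$ remains unproved.

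So the statement cannot be proved for arbitrary non-bifurcation pole fibers. The paper's proof gets past this point in one line: it says the generic fibers are equisingular curvettes of the resolution, hence $\tau(\F_{\omega_{f/g}},\B_\infty)=\tau(\F_{\omega_{f/g}},\bar\B_\infty)$. That inference is invalid for the same reason, since $\tau$ is not a topological invariant (compare Example \ref{ex1}), and the pencil above shows that it fails.

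Your fallback is to restrict the admissible $(\varphi_1,\dots,\varphi_r)$ to the Zariski-open subset of $U$ on which $\tau(\varphi_1\cdots\varphi_r)$ is minimal. That gives a complete argument, but for a modified statement: it changes which balanced divisors are allowed, equivalently the definition of $\tau(f,g)$. It does not prove the lemma as formulated. You located the weak point correctly, and you should present the fallback as a correction of the statement rather than as a proof of it.
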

\begin{proof}
 Let $\B$ and $\bar{\B}$ be two balanced divisors of separatrices for the foliation $\F_{\omega_{f/g}}$. Without loss of generality we can assume that $f$ and $g$ are generic fibers; moreover, note that by \cite[Lemma 4.3]{FFMR} the zero divisors of $\B$ and $\bar{\B}$ are the same. Thus, we can write these balanced divisors as follows: $\B=\B_0-\B_{\infty}\quad\text{and}\quad\bar{\B}=\B_0-\bar{\B}_{\infty}$.
On the other hand, note that $\B_\infty$ and $\bar{\B}_{\infty}$ are formed by a finite number of generic fibers of the pencil $\mathcal{P}_{f,g}$; furthermore, all these fibers are equisingular and correspond to {\it curvettes} in the resolution tree of singularities of the pencil, hence we have that $\tau(\F_{\omega_{f/g}},\B_{\infty})=\tau(\F_{\omega_{f/g}},\mathcal{\bar{B}}_{\infty}).$
So, using Definition \ref{defi_Tjurina}, we can deduce that 
$\tau(\F_{\omega_{f/g}},\B)=\tau(\F_{\omega_{f/g}},\bar{\B})$.
\end{proof}

Lemma \ref{l:independent}  enables us to introduce the following definition:

\begin{definition} Let $f,g$ be two holomorphic functions and consider the foliation $\F_{\omega_{f/g}}:\omega_{f/g}=gdf-fdg$. The \textit{Tjurina number of the pair $(f,g)$} is 
\begin{eqnarray}
    \tau(f,g):=\tau(\F_{\omega_{f/g}},\B),
\end{eqnarray}

\noindent where $\B=\B_0-\B_\infty$ is a primitive balanced divisor of separatrices of $\F_{\omega_{f/g}}$. 
\end{definition}

\begin{proposition}{\bf (Bifurcation formula for the Tjurina number of pairs)}\label{bifuti}
Let $f,g \in \mathcal O_2$ be relatively prime and assume that both vanish at the origin and let $\mathcal{B}=\mathcal{B}_0-\mathcal{B}_{\infty}$ be a balanced divisor of separatrices for $\F_{\omega_{f/g}}$, where $\mathcal{B}_0:fg h_1 \cdots h_r=0$ and $\mathcal{B}_\infty:\varphi_1 \cdots \varphi_r=0$. Then
\begin{eqnarray*}
\tau(f,g)= \tau(fg)+\sum_{j=1}^ r (\tau(h_j)-\tau_{gen})
+r^2\;I_0(f,g)+\Theta_{\mathcal{B}_{0}}+\Theta_{h_1 \cdots h_r}-\Theta_{\mathcal{B}_{\infty}}.
\end{eqnarray*}

\end{proposition}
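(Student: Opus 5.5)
The plan is to unwind the definition of $\tau(f,g)$ and reduce everything to Theorem~\ref{teo2} together with one intersection-multiplicity count. By Definition~\ref{def:Tjurina-general},
\[
\tau(f,g)=\tau(\F_{\omega_{f/g}},\B_0)-\tau(\F_{\omega_{f/g}},\B_\infty)+I_0(\B_0,\B_\infty),
\]
with $\B_0=fgh_1\cdots h_r$ and $\B_\infty=\varphi_1\cdots\varphi_r$. Theorem~\ref{teo2} already gives the difference of the first two terms as
\[
\tau(fg)+\sum_{j=1}^r(\tau(h_j)-\tau(\varphi_j))-2rI_0(f,g)+\Theta_{\B_0}+\Theta_{h_1\cdots h_r}-\Theta_{\B_\infty}.
\]
So two tasks remain: compute $I_0(\B_0,\B_\infty)$, and replace each $\tau(\varphi_j)$ by $\tau_{gen}$.

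For the intersection term, first note that any two distinct members $\alpha f+\beta g$ and $\gamma f+\delta g$ of the pencil satisfy $(\alpha f+\beta g,\gamma f+\delta g)=(f,g)$ as ideals, since $\alpha\delta-\beta\gamma\neq0$. Hence their intersection multiplicity equals $I_0(f,g)$. This is the same fact already used in \eqref{eq_sum1} and in the proof of Lemma~\ref{lemma_tjurina par}.

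By additivity of $I_0$ in each argument,
\[
I_0(\B_0,\B_\infty)=\sum_{u\in\{f,g,h_1,\dots,h_r\}}\ \sum_{j=1}^r I_0(u,\varphi_j).
\]
This is a sum of $(r+2)r$ terms. Each term equals $I_0(f,g)$, because the $\varphi_j$ are generic and hence distinct from $f$, $g$ and the bifurcation fibers $h_i$. Therefore $I_0(\B_0,\B_\infty)=r(r+2)I_0(f,g)$. Adding this to the $-2rI_0(f,g)$ from Theorem~\ref{teo2} gives exactly $r^2I_0(f,g)$, which is the coefficient in the statement.

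For the last step, the $\varphi_j$ are generic fibers of $\mathcal P_{f,g}$. By Lemma~\ref{l:independent}, $\tau(f,g)$ does not depend on which generic fibers are chosen for $\B_\infty$. We may therefore choose each $\varphi_j$ outside the finite set of parameters where the Tjurina number of the fiber jumps, that is, the finite exceptional set from the upper semicontinuity argument of Section~\ref{sec: Tjurina-pair-fiber}. Then $\tau(\varphi_j)=\tau_{gen}$ for every $j$, and substituting yields the stated formula.

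I expect the main obstacle to be bookkeeping rather than any deep step. One must make sure the generic fibers $\varphi_j$ avoid both the Milnor-bifurcation set (to get the shape of $\B$ from \cite[Lemma 4.3]{FFMR}) and the Tjurina-jump set (to get $\tau(\varphi_j)=\tau_{gen}$). Both are finite, so a common choice exists. One must also check that the hypotheses of Theorem~\ref{teo2} (in particular, $f$ and $g$ being treated there as generic fibers) are compatible with the present statement, where $f$ and $g$ appear in $\B_0$. The Theta terms and $\tau(fg)$ pass through unchanged, so no further computation is needed.
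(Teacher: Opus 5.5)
Your argument is the paper's: you compute $I_0(\mathcal{B}_0,\mathcal{B}_\infty)=r(r+2)I_0(f,g)$, using that distinct pencil members generate the ideal $(f,g)$ as in \eqref{eq_sum1}, substitute it into Theorem~\ref{teo2}, and replace $\tau(\varphi_j)$ by $\tau_{gen}$, a step the paper leaves implicit. One small correction: $\Theta_{\mathcal{B}_\infty}$ depends on the given $\varphi_j$, so re-choosing them via Lemma~\ref{l:independent} proves the formula for a different $\mathcal{B}$; instead, read ``generic'' in the statement as requiring the given $\varphi_j$ to avoid the finite Tjurina-jump set, which gives $\tau(\varphi_j)=\tau_{gen}$ directly.
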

\begin{proof}
 Using \eqref{eq_sum1}, we have  $I_0(\B_0,\B_\infty)=r(r+2)I_0(f,g)$. The proof ends substituting it in Theorem \ref{teo2}.
 \end{proof}

To illustrate Proposition \ref{bifuti}, we consider the following example, originally due to Szawlowski (see \cite[p.~70]{Sz2} or \cite[Example~4.5]{FFMR}).

\begin{example}\label{ex_bifur}
Let $f(x,y)=x^{3}-3x^{2}y+y^{3}+y^{5}$ and $g(x,y)=y^{3}-3x^{2}y$ in $\mathcal{O}_2$. 
The foliation $\mathcal{F}_{\omega _{f/g}}$ generated by the pencil $\mathcal{P}_{f,g}$ is given by
\[
\omega_{f/g}=(-3x^{4}y+3x^{2}y^{3}+6xy^{6})dx+(3x^{5}-3x^{3}y^{2}-12x^{2}y^{5}+2y^{7})dy.
\]

\noindent A balanced divisor of separatrices for $\F_{\omega_{f/g}}$ is 
\[\B=(h_1)+(h_2)+(h_3)+(f)+(g)-(\varphi_1)-(\varphi_2)-(\varphi_3),\]
where $h_{1}=f-g$, $h_{2}=2f-g$ and $h_{3}=2f-3g$ are the bifurcation fibers and $\varphi_1, \varphi_2,\varphi_3$ are generic fibers of the pencil. Therefore, 
\[\mathcal{B}_0:=fg h_1h_2h_3=0 \quad\text{and}\quad \mathcal{B}_\infty:=\varphi_1 \varphi_2 \varphi_3=0.\]
Using SINGULAR \cite{Singular}, we get
\begin{align*}
\tau(fg)&=24, & \tau(h_{1})&=8, & \tau(h_{2})&=6, & \tau(h_{3})&=6, & \tau_{gen}&=4,\\
I_0(f,g)&=9, & \Theta_{\mathcal{B}_{0}}&=30, & \Theta_{h_{1}h_{2}h_{3}}&=13, & \Theta_{\mathcal{B}_{\infty}}&=19,\\
\tau(\mathcal{F}_{\omega _{f/g}},\mathcal{B}_{0})&=33, & \tau(\mathcal{F}_{\omega _{f/g}},\mathcal{B}_{\infty})&=31.
\end{align*}
Then, on the one hand, we have
\[\tau(f,g)=137,\]
and on the other hand,
\[
\tau(fg)+\sum_{j=1}^{3}(\tau(h_j)-\tau_{gen})
+r^2 I_0(f,g)+\Theta_{\mathcal{B}_{0}}
+\Theta_{h_1h_2h_3}-\Theta_{\mathcal{B}_{\infty}}
=137.
\]
This verifies Proposition \ref{bifuti}.
\end{example}

\begin{theorem}\label{coro:bif}
Let $f,g \in \mathcal O_2$ be relatively prime and assume that both vanish at the origin and let $\mathcal{B}=\mathcal{B}_0-\mathcal{B}_{\infty}$ be a balanced divisor of separatrices for $\F_{\omega_{f/g}}$. Then

\[\tau(f,g)=\tau(\F_{\omega_{f/g}},\B)=\tau(\B_0)-\tau(\B_{\infty})+r(r-2)I_0(f,g),\]
where $\B_0=fg h_1 \cdots h_r$ and $\B_\infty=\varphi_1 \cdots \varphi_r$.  
\end{theorem}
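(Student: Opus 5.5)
The formula follows directly from Definition \ref{def:Tjurina-general} together with the GSV computations already made in Lemma \ref{lemma_tjurina} and the proof of Theorem \ref{teo2}. By definition,
\[
\tau(f,g)=\tau(\F_{\omega_{f/g}},\B)=\tau(\F_{\omega_{f/g}},\B_0)-\tau(\F_{\omega_{f/g}},\B_\infty)+I_0(\B_0,\B_\infty).
\]
I will express each of the two foliated Tjurina numbers as $\tau$ of the curve plus its GSV index, using \eqref{eq_gsv}. I will then compute the GSV indices and the intersection term explicitly in terms of $I_0(f,g)$.

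\textbf{Step 1 (pole divisor).} $\B_\infty=\varphi_1\cdots\varphi_r$ consists of $r$ distinct members of the pencil. Lemma \ref{lemma_tjurina}, applied verbatim with the $\varphi_j$ in place of the $h_j$, gives
\[
\tau(\F_{\omega_{f/g}},\B_\infty)=\tau(\B_\infty)-r(r-2)I_0(f,g).
\]
The only inputs are $GSV(\F_{\omega_{f/g}},\varphi_j)=I_0(f,g)$ and $I_0(\varphi_i,\varphi_j)=I_0(f,g)$ for $i\neq j$. The latter holds because any two distinct members of the pencil generate the same ideal as $(f,g)$.

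\textbf{Step 2 (zero divisor).} $\B_0=fgh_1\cdots h_r$ consists of $r+2$ distinct members of the pencil. The same branch-formula computation gives
\[
GSV(\F_{\omega_{f/g}},\B_0)=(r+2)I_0(f,g)-2\binom{r+2}{2}I_0(f,g)=-r(r+2)I_0(f,g).
\]
Hence
\[
\tau(\F_{\omega_{f/g}},\B_0)=\tau(\B_0)-r(r+2)I_0(f,g),
\]
which is exactly \eqref{equa1}.

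\textbf{Step 3 (intersection term).} Since each of the $r+2$ components of $\B_0$ meets each of the $r$ components of $\B_\infty$ with multiplicity $I_0(f,g)$, we get
\[
I_0(\B_0,\B_\infty)=r(r+2)I_0(f,g),
\]
as already noted in the proof of Proposition \ref{bifuti}.

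\textbf{Step 4 (assembly).} Substituting the three identities,
\[
\tau(f,g)=\tau(\B_0)-r(r+2)I_0-\tau(\B_\infty)+r(r-2)I_0+r(r+2)I_0=\tau(\B_0)-\tau(\B_\infty)+r(r-2)I_0(f,g),
\]
which is the claimed formula.

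The only delicate point is justifying that all pairwise intersection multiplicities and all individual GSV indices equal $I_0(f,g)$. This rests on \cite[Proposition 4.7]{FFMR} and on the ideal equality $(\alpha f+\beta g,\alpha' f+\beta' g)=(f,g)$ for $[\alpha:\beta]\neq[\alpha':\beta']$. The latter is immediate by inverting the $2\times2$ matrix of coefficients. Reducedness of all fibers, guaranteed by \cite[Lemma 4.1]{FFMR}, ensures \eqref{eq_gsv} applies. The case $r=0$ is consistent: then $\B_\infty$ is empty and the formula reads $\tau(f,g)=\tau(fg)$.
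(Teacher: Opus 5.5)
Your proof is correct, but it reaches the key identity by a slightly different route than the paper. You start from Definition~\ref{def:Tjurina-general} and substitute \eqref{equa1}, \eqref{equa2} and $I_0(\B_0,\B_\infty)=r(r+2)I_0(f,g)$; the terms $\mp r(r+2)I_0(f,g)$ then cancel.

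The paper does not compute $\tau(\F_{\omega_{f/g}},\B_0)$ or the intersection term at all. It cites \cite[Proposition 6.2]{FS-GB-SM1} together with the fact that $\F_{\omega_{f/g}}$ is a generalized curve foliation to get $\tau(\F_{\omega_{f/g}},\B)=\tau(\B_0)-\tau(\F_{\omega_{f/g}},\B_\infty)$ directly, and then only \eqref{equa2} is needed. So the identity $\tau(\F_{\omega_{f/g}},\B_0)+I_0(\B_0,\B_\infty)=\tau(\B_0)$, equivalently $GSV(\F_{\omega_{f/g}},\B_0)=-I_0(\B_0,\B_\infty)$, is something you verify by explicit counting, while the paper attributes it to the generalized-curve property.

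Your version is more self-contained: it uses only the GSV branch formula, \cite[Proposition 4.7]{FFMR} and the ideal equality $(\alpha f+\beta g,\alpha' f+\beta' g)=(f,g)$, and it makes the cancellation transparent. The paper's version is shorter and signals that the cancellation reflects a structural property of balanced divisors of generalized curve foliations, not a coincidence of the pencil.

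You also correctly carry the factor $I_0(f,g)$ in the pairwise intersection multiplicities. That factor is missing from the displayed formula \eqref{eq_sum1} in the paper.
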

\begin{proof}
Since the foliation $\mathcal{F}_{\omega_{f/g}}$ is of generalized curve type, by \cite[Proposition 6.2]{FS-GB-SM1} we get
\[\tau(f,g)=\tau(\mathcal{F}_{\omega_{f/g}}, \mathcal{B})=\tau(\mathcal{B}_0)-\tau(\mathcal{F}_{\omega_{f/g}},\mathcal{B}_{\infty}).\]
The proof follows from this equality and
\eqref{equa2}.
\end{proof}

\begin{example}\label{ejemplo43}
Returning to Example \ref{ex_bifur}, we have
\[
\tau(\mathcal{B}_0)=168,\; \tau(\mathcal{B}_\infty)=58, \; I_{0}(f,g)=9 ,\; \tau(f,g)=137,
\]
\noindent which illustrates Theorem \ref{coro:bif}. Moreover $\mu(f,g)=33$ and $\tau_{gen}(f,g)=13$.
\end{example}

\begin{remark}
    Note that in Example \ref{ejemplo43}, $\mu(f,g)=33<\tau(f,g)=137$. However, as we had noted before, we always have that $\tau_{gen}(f,g)\leq\mu(f,g)$. 
\end{remark}

\subsection{Characterization of  semitame meromorphic function germs:}
In \cite{Bodin1} and \cite{Sz2} a meromorphic function germ  $(f:g):(\mathbb{C}^2,0)\dashrightarrow \mathbb{P}^1$ is defined to be {\it semitame} if its only possible bifurcation fibers are $f$ and
$g$. In \cite[Corollary 4.9]{FFMR}, a characterization of semitame germs  is obtained under the assumption that $\mu(f,g)=\mu(fg)$. In this paper, we provide a new characterization of  semitame germs in terms of the Tjurina number.\\

\begin{corollary}\label{caracsemitame}
If $f$ and $g$ are coprime and all fibers of the pencil $\mathcal{P}_{f,g}$ are reduced, then the meromorphic germ $(f:g)$ is semitame if and only if $\tau(f,g)=\tau(fg)$. Moreover, this numerical condition is equivalent to 
 \begin{equation}\label{eq:corotame}
\tau_{[1:0]}(f,g) + \tau_{[0:1]}(f,g)=\tau(f)+\tau(g)+2I_{0}(f,g). 
\end{equation}
\end{corollary}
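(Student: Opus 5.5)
The plan is to read everything off Theorem \ref{coro:bif} and Proposition \ref{bifuti}, taking $r$ to be the number of bifurcation fibers of $\mathcal{P}_{f,g}$ different from $f$ and $g$.

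\emph{Semitame implies $\tau(f,g)=\tau(fg)$.} If $(f:g)$ is semitame, then $r=0$. By \cite[Lemma 4.3]{FFMR} a primitive balanced divisor is then simply $\B=(f)+(g)$, so $\B_0=fg$ and $\B_\infty=\emptyset$. Theorem \ref{coro:bif} gives
\[
\tau(f,g)=\tau(fg)-0+0\cdot I_0(f,g)=\tau(fg).
\]
Equivalently, this is Definition \ref{def:Tjurina-general} with empty pole part, combined with \eqref{eq_gsv}.

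\emph{The converse.} Assume $\tau(f,g)=\tau(fg)$ and suppose, for contradiction, that $r\geq 1$. Proposition \ref{bifuti} rewrites the hypothesis as
\[
0=\sum_{j=1}^r\bigl(\tau(h_j)-\tau_{gen}\bigr)+r^2I_0(f,g)+\Theta_{\B_0}+\Theta_{h_1\cdots h_r}-\Theta_{\B_\infty}.
\]
I would control the terms in three steps.
\begin{enumerate}
\item By upper semicontinuity of $\tau$ along the pencil \cite[Theorem 2.6]{G-L-S}, each difference $\tau(h_j)-\tau_{gen}$ is $\geq 0$.
\item From the definition in \cite{H-H}, the terms $\Theta_{\B_0}$ and $\Theta_{h_1\cdots h_r}$ should be $\geq 0$.
\item The only negative contribution is $\Theta_{\B_\infty}$. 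To bound it, apply \eqref{tuj_eq} to $\varphi_1\cdots\varphi_r$ together with $\tau\le\mu$ and $\mu(\varphi_1\cdots\varphi_r)=\sum\mu(\varphi_j)+r(r-1)I_0(f,g)-r+1$. This gives
\[
\Theta_{\B_\infty}\le \sum_{j}\bigl(\mu(\varphi_j)-\tau(\varphi_j)\bigr)+\tfrac{r(r-1)}{2}I_0(f,g)-r+1 .
\]
\end{enumerate}
Combining the three steps, the right-hand side is at least
\[
\tfrac{r(r+1)}{2}I_0(f,g)+r-1-r\bigl(\mu_{gen}-\tau_{gen}\bigr).
\]
It therefore remains to show that $\mu_{gen}-\tau_{gen}$ is small compared with $I_0(f,g)$.

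\emph{Main obstacle.} This last comparison is where I expect the real difficulty. The first thing I would try is the generic structure of the pencil. Since the generic fibers $\varphi_j$ are curvettes in the common resolution (as used in Lemma \ref{l:independent}), there should be an estimate of the form $\mu_{gen}-\tau_{gen}<I_0(f,g)$, for instance via $\mu(\varphi)\le I_0(\varphi,\varphi')$ for two generic members $\varphi,\varphi'$. This would force the right-hand side to be strictly positive when $r\geq1$, a contradiction. If that estimate fails, the fallback is to compute $\Theta_{\B_\infty}-\Theta_{\B_0}$ directly on the common resolution, where the generic fibers $\varphi_j$ and the fibers $f,g,h_j$ meet the dicritical components in the same way.

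\emph{Equivalence with \eqref{eq:corotame}.} By Proposition \ref{Tju}, $\tau_{[1:0]}(f,g)=I_0(f,g)+\tau(f)$ and $\tau_{[0:1]}(f,g)=I_0(f,g)+\tau(g)$. By \eqref{tuj_eq}, $\tau(fg)=\tau(f)+\tau(g)+I_0(f,g)+\Theta_{fg}$. Substituting these into $\tau(f,g)=\tau(fg)$ should turn the numerical condition into \eqref{eq:corotame}. Before committing I would check this bookkeeping carefully, since with these substitutions \eqref{eq:corotame} looks close to an identity.
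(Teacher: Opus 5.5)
Your forward implication is correct and is the paper's argument: for a semitame germ $\B=(f)+(g)$, and $GSV(\F_{\omega_{f/g}},fg)=2I_0(f,g)-2I_0(f,g)=0$, so $\tau(f,g)=\tau(fg)$.

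\textbf{The converse.} Here you take a different route from the paper. The paper argues through \cite[Corollary 4.9]{FFMR} and works only with the effective divisor $fg$. That does not by itself exclude $r\geq 1$, since $\mu(f,g)-\tau(f,g)=\mu(fg)-\tau(fg)$ fails off the semitame case; in Example \ref{ex_bifur}, $33-137<0\leq\mu(fg)-\tau(fg)$. Your positivity argument via Proposition \ref{bifuti} is the right idea, but it has a gap exactly at the decisive step. The estimate $\mu_{gen}-\tau_{gen}<I_0(f,g)$ is only conjectured, and $\Theta_{\B_0},\Theta_{h_1\cdots h_r}\geq 0$ is only asserted. Both can be supplied.

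\emph{Nonnegativity of the $\Theta$'s.} For coprime reduced $u,v$ one has $\dim_{\mathbb C}\mathcal{O}_2/(u,vu_x,vu_y)=I_0(u,v)+\tau(u)$ and $\bigl((uv,(uv)_x,(uv)_y):u\bigr)\subseteq(v,v_x,v_y)$. Hence $\tau(uv)\geq\tau(u)+\tau(v)+I_0(u,v)$, and iterating shows all the relevant $\Theta$'s are nonnegative.

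\emph{The estimate.} Use Noether's formula. Two generic members $\varphi,\varphi'$ have as common infinitely near points exactly the base points $p$ of the pencil, with equal multiplicities $m_p$. After blowing these up, the strict transform of $\varphi$ is smooth and transverse to the exceptional divisor. Hence $I_0(f,g)=I_0(\varphi,\varphi')=\sum_p m_p^2$, while $\mu_{gen}=\sum_p m_p(m_p-1)-b(\varphi)+1\leq I_0(f,g)-1$, where $b(\varphi)$ is the number of branches. Your lower bound then becomes $\frac{r(r-1)}{2}I_0(f,g)+2r-1>0$ for $r\geq 1$, which is the contradiction you wanted.

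\textbf{The ``moreover'' clause.} This is not a bookkeeping issue, and the clause cannot be proved as stated. By Proposition \ref{Tju}, $\tau_{[1:0]}(f,g)=I_0(f,g)+\tau(f)$ and $\tau_{[0:1]}(f,g)=I_0(f,g)+\tau(g)$ for every coprime pair of reduced germs. So \eqref{eq:corotame} is an identity, and it cannot be equivalent to $\tau(f,g)=\tau(fg)$. In Example \ref{ex_bifur}, both sides of \eqref{eq:corotame} equal $26$, whereas $\tau(f,g)=137\neq 24=\tau(fg)$. The paper obtains \eqref{eq:corotame} by inserting the branch formulas for $\tau(fg)$ and $\tau(\F_{\omega_{f/g}},fg)$, i.e.\ only when $\B=fg$, where both conditions already hold automatically. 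What survives is the unconditional identity, not an equivalence.
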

\begin{proof}
By \cite[Corollary 4.9]{FFMR}, the meromorphic germ $(f:g)$ is semitame if and only if $\mu(f,g)=\mu(fg)$. 
Since $\mu(\mathcal{F}_{\omega_{f/g}},\mathcal{B})=\mu(f,g)=\mu(fg)$ with $\mathcal{B}=fg$ by \cite[p. 8]{FS-GB-SM2} and by \cite[Proposition 4.1]{FS-GB-SM2}, this condition is equivalent to $\tau(f,g)=\tau(\mathcal{F}_{\omega_{f/g}},\mathcal{B})=\tau(fg)$.

\medskip

The final statement follows by applying the branch formulas for $\tau(fg)$ and
$\tau(\F_{\omega_{f/g}},\B)$, described in \eqref{tuj_eq} and \cite[Proposition 3.1]{FS-GB-SM2}, respectively.
\end{proof}

\begin{example}
Let $m,n$ be two coprime natural numbers. Consider $f(x,y)=x^m+y^n$ and 
$g(x,y)=x^n+y^m$, where $2\leq m<n$  and $f$ and $g$ are coprime. We claim that the meromorphic germ $(f:g)$ is semitame. 
Indeed, since $f$ and $g$ are quasi-homogeneous (Saito's Theorem \cite{Saito}) we get
\begin{equation}\label{ecc}
\tau(f)=\mu(f)=(m-1)(n-1), \;\; \mbox{and}\;\; \tau(g)=\mu(g)=(m-1)(n-1).
\end{equation}
Moreover $I_{0}(f,g)=m^2$ and $\Theta_{fg}=2(m-1)$. Then \[\tau(fg)=\tau(f)+\tau(g)+I_{0}(f,g)+\Theta_{fg}=m^2+2n(m-1).\]

On the other hand, by Proposition \ref{Tju}, we obtain 
\begin{equation*}\tau(\mathcal{F}_{\omega_{f/g}},f)=\tau_{[1:0]}(f,g)=m^2+(m-1)(n-1)
\end{equation*}
and
\begin{equation*}\tau(\mathcal{F}_{\omega_{f/g}},g)=\tau_{[0:1]}(f,g)=m^2+(m-1)(n-1).
\end{equation*}
Then
\[
\tau(f,g)=\tau(\mathcal{F}_{\omega_{f/g}},fg)=\tau(\mathcal{F}_{\omega_{f/g}},f)+\tau(\mathcal{F}_{\omega_{f/g}},g)-I_0(f,g)+\Theta_{fg}=m^2+2n(m-1)=\tau(fg).
\]
In addition, note that
\[\tau(f)+\tau(g)+2I_{0}(f,g)=2(m-1)(n-1)+m^2=\tau_{[1:0]}(f,g)+\tau_{[0:1]}(f,g).\]

We finish after Corollary \ref{caracsemitame}. By taking $n=3$ and $m=2$, we recover \cite[Example 10]{Bodin2}.
\end{example}

\begin{corollary}\label{caracsemitame1}
If $f$ and $g$ are coprime, all fibers of the pencil $\mathcal P_{f,g}$ are reduced and the meromorphic germ $(f:g)$ is semitame then
 \begin{equation*}\label{eq:corotame1}
\mu(f,g) - \tau(f,g)=\mu(fg)-\tau(fg). 
\end{equation*}
\end{corollary}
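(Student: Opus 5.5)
The plan is to combine the two characterizations of semitameness already available. By \cite[Corollary 4.9]{FFMR}, if $(f:g)$ is semitame then $\mu(f,g)=\mu(fg)$. By Corollary \ref{caracsemitame}, under the same hypotheses (coprime, all fibers of $\mathcal P_{f,g}$ reduced), semitameness is equivalent to $\tau(f,g)=\tau(fg)$. Subtracting the second equality from the first gives
\[
\mu(f,g)-\tau(f,g)=\mu(fg)-\tau(fg),
\]
which is the claim.

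To make the argument self-contained, I would first recall why semitameness forces $r=0$, i.e.\ $\mathcal B(f,g)\subset\{0,\infty\}$. In that case the balanced divisor of separatrices of $\F_{\omega_{f/g}}$ reduces to $\B=(f)+(g)$ with empty pole divisor, as described in \cite[Lemma 4.3]{FFMR}. The Milnor side then follows directly from the Bifurcation Formula (Theorem \ref{th bif}): the sum over $\mathcal B(f,g)^*$ is empty, so $\mu(f,g)=\mu_0(fg)=\mu(fg)$.

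For the Tjurina side I would use Theorem \ref{coro:bif} with $r=0$. This gives $\tau(f,g)=\tau(\B_0)-\tau(\B_\infty)+0=\tau(fg)$, since $\B_\infty$ is empty and contributes $0$. Alternatively, by the remark after Definition \ref{def:Tjurina-general} the divisor $\B$ is effective, so $\tau(f,g)=\tau(\F_{\omega_{f/g}},fg)$. Then \eqref{eq_gsv} gives $\tau(fg)+GSV(\F_{\omega_{f/g}},fg)$, and the branch formula for GSV gives $GSV(\F_{\omega_{f/g}},fg)=I_0(f,g)+I_0(f,g)-2I_0(f,g)=0$, using $GSV(\F_{\omega_{f/g}},f)=GSV(\F_{\omega_{f/g}},g)=I_0(f,g)$ from \cite[Proposition 4.7]{FFMR}. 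This second route also recovers the Milnor identity in parallel via \cite[Proposition 4.1]{FS-GB-SM2}: $\mu(f,g)=\mu(\F_{\omega_{f/g}},fg)=\mu(fg)+GSV=\mu(fg)$.

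The only delicate point is justifying that, in the semitame case, the balanced divisor really is $(f)+(g)$ with no poles. If $f$ or $g$ is itself a generic fiber, the description in \cite[Lemma 4.3]{FFMR} still includes $f$ and $g$ in the zero divisor and adds no bifurcation fibers, so $r=0$ in either case. Once this is settled, the rest is the direct subtraction above.
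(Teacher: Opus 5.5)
Your proposal is correct and follows essentially the paper's own argument. The paper likewise takes $\B=(f)+(g)$, gets $\mu(f,g)=\mu(fg)$ from \cite[Corollary 4.9]{FFMR} and $\tau(f,g)=\tau(fg)$ from \cite[Proposition 5.1]{FS-GB-SM2} (which is equivalent to Corollary \ref{caracsemitame}), and subtracts; your extra check $GSV(\F_{\omega_{f/g}},fg)=2I_0(f,g)-2I_0(f,g)=0$ is a clean self-contained justification of the Tjurina equality.
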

\begin{proof}
Taking $\mathcal{B}=fg$ and applying \cite[Corollary 4.9]{FFMR}, we obtain 
\begin{equation}\label{muu}
\mu(\mathcal{F}_{\omega_{f/g}},\mathcal{B})=\mu(\mathcal{F}_{\omega_{f/g}})=\mu(f,g)=\mu(fg).
\end{equation} 
Hence, \cite[Proposition 5.1]{FS-GB-SM2} yields 
\begin{equation}\label{tiu}
\tau(\mathcal{F}_{\omega_{f/g}},\mathcal{B})=\tau(f,g)=\tau(fg).
\end{equation}

\noindent The result now follows from
 \eqref{muu} and \eqref{tiu}.
\end{proof}

\section{Teissier Lemma for the pair $(f,g)$}
Before establishing the results of this section, let us recall the definition of polar curves of foliations, a concept that was introduced in \cite{Corral1}.
\par Let $\omega=P(x,y)dx+Q(x,y)dy$ be a 1-form, where $P(x,y),Q(x,y)\in \mathcal{O}_2$.
If $\F: \omega=0$ is a singular holomorphic foliation then {\it the polar curve} of $\F$  at $(\C^2,p)$ with respect to a point $(a:b)$ of the complex projective line $\mathbb{P}^1$ is   curve $\mathcal{P}^{\F}_{(a:b)}: aP(x,y)+bQ(x,y)=0$. 
Observe that when $\F:df=0$ is the hamiltonian foliation associated to a function $f$, the polar curve of $\F$ coincide with the classical polar curve of $f$ in the direction $(a:b)$ studied by Teissier \cite{Teissier_inv} and others.  According to the general results on equisingularity (see \cite{Zariski} and \cite{Teissier_polar}), there exists a Zariski open $U$ of the space $\mathbb P^1$ of projection directions such that for $(a:b)$  the polar curves are all equisingular. Any element of this set is called {\it generic polar curve} of the foliation $\F$ and we will denote it by $\mathcal{P}^{\F}$. 
Let $B: h(x,y)=0$ be a separatrix  of a singular foliation $\F$. The {\it polar intersection number} of $\F$ with respect to $B$ is the intersection number $I_0({\mathcal P}^{\F},B)$.

In the context of complex hypersurfaces, Teissier proved the following formula relating the
polar intersection number $I_0(\mathcal{P}^{df},C)$ (see \cite[Section~4]{FS-GB-SM1}) and the Milnor
number $\mu(C)$; this result is known as Teissier's Lemma (see
\cite[Chapter~4, Proposition~1.2]{Teissier}):
 
\begin{lemma}[Teissier's Lemma]\label{lemma_tei}
    Let $f\in\mathcal{O}_2$ be such that $C=\{f=0\}$ is a reduced plane curve germ at $0\in\C^2$. Then
\[I_{0}(\mathcal{P}^{df},C)=\mu(C)+\nu(f)-1,\]
where $\nu(f)$ denotes the algebraic multiplicity of $f$ at the origin.
\end{lemma}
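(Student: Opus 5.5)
We argue via a generic linear projection and the classical polar--Milnor computation. After a generic linear change of coordinates we may take the direction $(a:b)=(0:1)$ generic. Then the generic polar curve of $\F: df=0$ is $\mathcal{P}^{df}: f_y=0$, and $x=0$ is transversal to $C$, so $I_0(f,x)=\nu(f)$. The identity to be proved becomes
\[
I_0(f_y,f)=\mu(C)+\nu(f)-1=I_0(f_x,f_y)+I_0(f,x)-1 .
\]

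The natural route is to factor $f_y$ into branches and compute $I_0(f,\gamma)$ along each polar branch through a parametrization. Let $f_y=\prod_k \gamma_k$ with branches $\gamma_k$ parametrized by $t\mapsto (x_k(t),y_k(t))$. Along such a branch
\[
\frac{d}{dt}f(x_k(t),y_k(t))=f_x\,x_k'(t)+f_y\,y_k'(t)=f_x\,x_k'(t),
\]
since $f_y$ vanishes identically on $\gamma_k$. Taking orders in $t$ gives
\[
\operatorname{ord}_t f(\gamma_k)-1=\operatorname{ord}_t f_x(\gamma_k)+\operatorname{ord}_t x_k(t)-1,
\]
that is,
\[
I_0(f,\gamma_k)=I_0(f_x,\gamma_k)+I_0(x,\gamma_k).
\]
For this step we need $x$ to be non-constant on every polar branch, which holds for a generic direction.

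Summing over the branches of $f_y$ yields
\[
I_0(f,f_y)=I_0(f_x,f_y)+I_0(x,f_y)=\mu(C)+I_0(x,f_y).
\]
This accounts for any repeated branches of $f_y$, since multiplicities are additive. It remains to check that $I_0(x,f_y)=\operatorname{ord}_y f_y(0,y)=\nu(f)-1$. This holds because genericity gives $\operatorname{ord}_y f(0,y)=\nu(f)$, so the $y$-derivative drops the order by exactly one.

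The main obstacle is the genericity bookkeeping. We must ensure that a single Zariski-open set of directions simultaneously makes the polar curve generic, in the sense of the paper, and makes $\{x=0\}$ transversal to both $C$ and every branch of $f_y$. Both conditions are open and nonempty, so their intersection is the desired set. The intersection number $I_0(\mathcal{P}^{df},C)$ is constant on the equisingular open set, so computing it for one such direction suffices.
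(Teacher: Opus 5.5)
The paper does not prove this lemma; it only cites it from Teissier. Your argument is correct and self-contained, and it is the classical argument behind that reference, specialized to plane curves. On each branch $\gamma$ of $f_y=0$ one has $(f\circ\gamma)'=(f_x\circ\gamma)\,(x\circ\gamma)'$. This gives $I_0(f,\gamma)=I_0(f_x,\gamma)+I_0(x,\gamma)$, and summing over branches gives $I_0(f,f_y)=\mu(C)+I_0(x,f_y)$, with $I_0(x,f_y)=\operatorname{ord}_y f(0,y)-1$.

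You use finiteness on each branch implicitly when taking orders, but it is automatic. A common branch of $f$ and $f_y$ would either lie in $\{f_x=f_y=0\}$, which is impossible since $\mu(C)<\infty$, or be the line $x=0$, which transversality excludes.

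Compared with the citation, your route gives an elementary proof of the slightly sharper identity $I_0(f,f_y)=\mu(C)+I_0(f,x)-1$, valid whenever $x\nmid f$. The citation gives brevity and access to Teissier's higher-dimensional version.

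One sentence in your last paragraph is wrong, although the conclusion survives. $I_0(\mathcal{P}^{df}_{(a:b)},C)$ is not constant, in general, on the open set where the polar curves are equisingular. Take $f=xy$: every polar $ay+bx=0$ is a smooth line. Yet for $(a:b)=(1:0)$ and $(0:1)$ the polar is a branch of $C$, so the intersection number is infinite, while generically it equals $2=\mu(C)+\nu(f)-1$.

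You do not need that claim. After the linear change of coordinates sending $(a,b)$ to $(0,1)$, your computation applies to every direction whose line $\C\cdot(a,b)$ is not tangent to $C$. So ``generic'' should be read as including this non-tangency, which is exactly the open condition you impose.

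Your extra requirements are automatic: that $\{x=0\}$ be transversal to the branches of $f_y$, and that $x$ be non-constant on them. Both follow from $\operatorname{ord}_y f_y(0,y)=\nu(f)-1=\operatorname{ord} f_y$.
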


For the foliation defined by $\omega_{f/g}=(gf_{x}-f g_{x})dx + (gf_{y}-f g_{y})dy$, its polar curve with respect to $(a:b)\in\mathbb{P}^1$ is given by

\[\mathcal{P}_{(a:b)}^{\omega_{f/g}}=(gf_{x}-f g_{x})a + (gf_{y}-f g_{y})b\,\,\,\text{for}\,\,\,(a:b) \in \mathbb{P}^{1}.\] Observe that
\begin{equation}\label{ec3}
\mathcal{P}_{(a:b)}^{\omega_{f/g}}:=g(af_{x}+bf_{y})-f(ag_{x}+bg_{y})=g\mathcal{P}^{df}_{(a,b)} - f\mathcal{P}^{dg}_{(a,b)}.
\end{equation}
\begin{proposition}\label{prop_polar}
Let $C_{[\alpha:\beta]}=\{\alpha f+\beta g=0\}$ be a member of the pencil $\mathcal{P}_{f,g}$. Then
\[
I_0\left(\mathcal{P}^{\omega_{f/g}}_{(a:b)},C_{[\alpha:\beta]}\right)=I_0(f,g)+\mu(C_{[\alpha:\beta]})+ \nu (C_{[\alpha:\beta]}) -1,
\]
where $\nu(C_{[\alpha:\beta]})$ denotes the algebraic multiciplity of $C_{[\alpha:\beta]}$ at the origin.
\end{proposition}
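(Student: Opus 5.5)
Set $h:=\alpha f+\beta g$ and reduce to Teissier's Lemma (Lemma~\ref{lemma_tei}) for $h$, using \eqref{ec3}. The idea is to rewrite the polar curve $\mathcal{P}^{\omega_{f/g}}_{(a:b)}$ modulo $h$ so that it factors as a unit-free product of a fiber and the classical polar of $h$.

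First suppose $\beta\neq 0$. Then on $C_{[\alpha:\beta]}$ we have $g\equiv -(\alpha/\beta) f$ modulo $h$. Write $D=a\partial_x+b\partial_y$ and set
\[
\beta\,\mathcal{P}^{\omega_{f/g}}_{(a:b)}=\beta g\,Df-\beta f\,Dg .
\]
Use the identity
\[
\beta g\,Df-\beta f\,Dg \;=\; (\alpha f+\beta g)Df - f\,D(\alpha f+\beta g)=h\,Df-f\,Dh .
\]
It follows that $\beta\,\mathcal{P}^{\omega_{f/g}}_{(a:b)}\equiv -f\,Dh=-f\,\mathcal{P}^{dh}_{(a:b)}$ modulo $(h)$. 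Since the ideal $(h,\mathcal{P}^{\omega_{f/g}})$ equals $(h,f\,\mathcal{P}^{dh})$, additivity of intersection multiplicity gives
\[
I_0\bigl(\mathcal{P}^{\omega_{f/g}}_{(a:b)},h\bigr)=I_0(f,h)+I_0\bigl(\mathcal{P}^{dh}_{(a:b)},h\bigr).
\]
When $\beta=0$ the fiber is $h=f$ up to a constant, and the same identity holds with the roles of $f,g$ exchanged: $\mathcal{P}^{\omega}\equiv -f\,Dg$ on $f=0$ reduces to $g\,Dh$ after swapping. So in general we pick whichever of $f,g$ is not proportional to $h$.

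Next, since $\beta\neq0$ and $f,g$ are coprime, $(f,h)=(f,\beta g)=(f,g)$, so $I_0(f,h)=I_0(f,g)$. Similarly $I_0(g,h)=I_0(f,g)$ in the other case.

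Finally, $C_{[\alpha:\beta]}$ is reduced (all fibers are reduced by \cite[Lemma 4.1]{FFMR}). For $(a:b)$ generic, Lemma~\ref{lemma_tei} applied to $h$ gives
\[
I_0\bigl(\mathcal{P}^{dh}_{(a:b)},h\bigr)=\mu(C_{[\alpha:\beta]})+\nu(C_{[\alpha:\beta]})-1 .
\]
Summing the two terms yields the claimed formula.

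The main point needing care is the genericity of $(a:b)$. Teissier's Lemma holds for generic polar directions, so the statement should be read with $(a:b)$ generic. We must also ensure the polar of $h$ shares no component with $h$; this holds for generic directions since $h$ is reduced. A second point is that the intersection numbers are finite, and this is guaranteed by the same genericity together with the coprimality of $f$ and $g$. The algebraic identity itself is routine.
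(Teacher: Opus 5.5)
Your proof is correct and is essentially the paper's argument: the paper assumes $\alpha\neq 0$ and adds $(\alpha f+\beta g)\,\mathcal{P}^{dg}_{(a:b)}/\alpha$ to rewrite the polar modulo $h$ as $\tfrac{g}{\alpha}\,\mathcal{P}^{dh}_{(a:b)}$. That is your identity with the roles of $f$ and $g$ swapped, and the paper then concludes, as you do, by additivity, $I_0(g,h)=I_0(f,g)$, and Teissier's Lemma. (A small slip in your $\beta=0$ remark: modulo $f$ the polar is congruent to $g\,Df$, not $-f\,Dg$, though your conclusion $g\,Dh$ is right.)
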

\begin{proof}
We assume that $\alpha\neq 0$ and generic.
Combining \eqref{ec3}, Lemma~\ref{lemma_tei}, and the properties of the intersection multiplicity, we obtain

\[
\begin{array}{lll}
I_0\left(\mathcal{P}^{\omega_{f/g}}_{(a:b)},C_{[\alpha:\beta]}\right) &=&  I_0\left(g\mathcal{P}^{df}_{(a:b)} - f\mathcal{P}^{dg}_{(a:b)},\alpha f + \beta g\right)\\
 &=& I_0\left(\alpha f + \beta g, g\mathcal{P}^{df}_{(a:b)} - f\mathcal{P}^{dg}_{(a:b)}+(\alpha f+\beta g)\frac{\mathcal{P}^{dg}_{(a:b)}}{\alpha}\right)\\
 &=&  I_0\left(\alpha f+ \beta g, \frac{g}{\alpha}\left(\alpha\mathcal{P}^{df}_{(a:b)} +\beta\mathcal{P}^{dg}_{(a:b)}\right)\right)\\
 &=& I_0(f,g)+ I_{0}\left(\alpha f + \beta g, \mathcal{P}^{d(\alpha f+\beta g)}_{(a:b)}\right)\\
 &=& I_0(f,g)+ \mu(C_{[\alpha:\beta]}) + \nu(C_{[\alpha:\beta]})-1.
\end{array}
\]
\end{proof}

Let $f$ and $g$ be relatively prime germs of reduced holomorphic functions
vanishing at the origin of $\mathbb{C}^2$. The fibers of the pencil generated by
$f$ and $g$ are naturally described by the holomorphic foliation defined by the
germ of the $1$-form $\omega_{f/g}=gdf-fdg.$
\par As is well known, this foliation is dicritical and therefore admits a balanced divisor of separatrices at the origin of $\mathbb{C}^{2}$ (see \cite[Section 1]{Genzmer}. Let
\[\mathcal{B}=\sum_{B}a_B\cdot B\]
be such a divisor. We define the {\it polar intersection number} of the pair $(f,g)$ along $\mathcal{B}$ by
\[{\tt i}_{0}((f,g),\mathcal{B}):=\sum_{B} a_B \cdot I_{0}(\mathcal{P}^{\omega_{f/g}},B),\]
where $\mathcal{P}^{\omega_{f/g}}$ denotes the generic polar curve of the foliation $\F_{\omega_{f/g}}$.
By \cite[Lemma 4.1]{FS-GB-SM1}, this definition is independent of the choice of the balanced divisor.

\begin{corollary}[{\it Teissier's Lemma for Pairs}]
    We have 
    \[{\tt i}_{0}((f,g),\B)=\mu(f,g)+\nu(f)+\nu(g)-1.\]
\end{corollary}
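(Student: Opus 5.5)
The plan is to evaluate ${\tt i}_{0}((f,g),\B)$ on one explicit balanced divisor and then add up fiber by fiber using Proposition~\ref{prop_polar}. Since the polar intersection number does not depend on the balanced divisor (\cite[Lemma 4.1]{FS-GB-SM1}), I would take the divisor from \cite[Lemma 4.3]{FFMR}:
\[
\B=(f)+(g)+(h_1)+\cdots+(h_r)-(\varphi_1)-\cdots-(\varphi_r).
\]
Here $h_1,\ldots,h_r$ are the bifurcation fibers of $\mathcal P_{f,g}$ other than $f$ and $g$, and the $\varphi_j$ are generic fibers. Every component is a member $C_{[\alpha:\beta]}$ of the pencil, so Proposition~\ref{prop_polar} gives $I_0(\mathcal{P}^{\omega_{f/g}},B)=I_0(f,g)+\mu(B)+\nu(B)-1$ for each $B$. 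Applying this to the generic polar curve is legitimate because the proposition holds for generic $(a:b)$.

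Summing with the coefficients $a_B$ (there are $r+2$ positive terms and $r$ negative ones) gives
\[
{\tt i}_0((f,g),\B)=2I_0(f,g)-2+\mu(f)+\mu(g)+\nu(f)+\nu(g)+\sum_{i=1}^r\bigl(\mu(h_i)-\mu_{gen}\bigr)+\sum_{i=1}^r\bigl(\nu(h_i)-\nu_{gen}\bigr),
\]
where $\nu_{gen}$ is the multiplicity of a generic member. For the Milnor part I use $\mu(fg)=\mu(f)+\mu(g)+2I_0(f,g)-1$, which turns the first group of terms into $\mu(fg)-1$. The Bifurcation Formula (Theorem~\ref{th bif}) then identifies $\mu(fg)+\sum_i(\mu(h_i)-\mu_{gen})$ with $\mu(f,g)$, since the $h_i$ are exactly the elements of $\mathcal B(f,g)^*$. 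This leaves
\[
{\tt i}_0((f,g),\B)=\mu(f,g)+\nu(f)+\nu(g)-1+\sum_{i=1}^r\bigl(\nu(h_i)-\nu_{gen}\bigr).
\]

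The main obstacle is the last sum, which requires controlling multiplicities along the pencil. Write $f=f_n+\cdots$ and $g=g_m+\cdots$ for the initial forms. A member $\alpha f+\beta g$ has multiplicity exceeding $\nu_{gen}=\min(n,m)$ only if $n=m$ and $\alpha f_n+\beta g_n=0$. Since $f$ and $g$ are coprime, this happens for at most one $[\alpha:\beta]$; if two members jumped, then $f_n$ and $g_n$ would both vanish. So $\nu(h_i)=\nu_{gen}$ for every $h_i$ except possibly one exceptional member, and the sum vanishes as soon as that member (if it exists) is $f$ or $g$ rather than one of the $h_i$.

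I would therefore state explicitly that the equality holds under this normalization of the generators. The normalization costs nothing: replacing $(f,g)$ by other generators of the same pencil multiplies $\omega_{f/g}$ by a nonzero constant, so neither $\mu(f,g)$ nor the foliation changes. Without the normalization, the argument gives the invariant form of the result: the right-hand side is $\mu(f,g)+\nu_{gen}+\nu_{max}-1$, where $\nu_{max}$ is the largest multiplicity occurring in the pencil. This agrees with $\mu(f,g)+\nu(f)+\nu(g)-1$ precisely when $\{\nu(f),\nu(g)\}=\{\nu_{gen},\nu_{max}\}$.
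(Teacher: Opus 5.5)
Your computation is the same as the paper's. It evaluates ${\tt i}_0$ on the divisor of \cite[Lemma 4.3]{FFMR}, applies Proposition~\ref{prop_polar} to each component, and finishes with $\mu(fg)=\mu(f)+\mu(g)+2I_0(f,g)-1$ and Theorem~\ref{th bif}.

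The difference is in what you keep. The paper assumes $f,g$ are generic fibers and calls the result immediate. That silently drops the term $\sum_i(\nu(h_i)-\nu_{gen})$ you isolate, and your caveat is genuinely needed.

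Here is a counterexample to the unnormalized statement. Take $f=xy$ and $g=xy+x^3+y^3$.
\begin{itemize}
\item Both are nodes, so both are generic members.
\item The only bifurcation fiber is $h=f-g=-(x^3+y^3)$, of multiplicity $3$.
\item We have $\omega_{f/g}=(y^4-2x^3y)\,dx+(x^4-2xy^3)\,dy$, so $\mu(f,g)=16$. This agrees with Theorem~\ref{th bif}, which gives $13+3$.
\item On the paper's divisor $(h)+(f)+(g)-(\varphi_1)$, Proposition~\ref{prop_polar} gives ${\tt i}_0=12+8+8-8=20$.
\end{itemize}
But $\mu(f,g)+\nu(f)+\nu(g)-1=19$. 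With the generators $xy$ and $x^3+y^3$ instead, the right-hand side becomes $20$.

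So the stated identity holds exactly under your normalization. Your invariant form $\mu(f,g)+\nu_{gen}+\nu_{max}-1$ is the correct general statement; equivalently, it is $\mu(f,g)$ plus the algebraic multiplicity of $\omega_{f/g}$.

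One small slip. The claim ``only if $n=m$'' is false when $n\neq m$, because then the generator of larger order is itself the member whose multiplicity jumps. Your conclusion is unaffected: there is at most one jumping member, and when $n\neq m$ it is $f$ or $g$. Also, the uniqueness of that member comes from $f_n$ and $g_n$ being nonzero initial forms, not from coprimality of $f$ and $g$.
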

\begin{proof}
Consider the balanced divisor \[\B=(h_1)+ \cdots + (h_r)+(f)+(g)-(\varphi_1)- \cdots -(\varphi_r),\]
where $h_i=\alpha_i f+ \beta_i g$, with 
$\alpha_i,\beta_i\in\C$ for $1\leq i\leq r$, and
$\varphi_j=\alpha_j f+\beta_j g$, with
$\alpha_j,\beta_j\in\C$ for  $1\leq j\leq r$.
Here, $h_1,\ldots,h_r$ are the bifurcation fibers, whereas $f,g,\varphi_1,\ldots,\varphi_r$ are generic fibers of the pencil $\mathcal{P}_{f,g}$.
The result follows immediately from Proposition \ref{prop_polar} and Theorem \ref{th bif}.
\end{proof}

Let $J(f,g)=f_xg_y-f_yg_x$ denote the Jacobian curve of the pair $(f,g)$. By Lê's formula (see \cite[Corollary 3.7.2]{Le}, we have
\[I_{0}(f,f_xg_y-f_yg_x)=\mu(f)+I_0(f,g)-1.\]
By symmetry,  $I_0(g,f_xg_y-f_yg_x)=\mu(g)+I_{0}(f,g)-1$.
Adding these two equalities and using the properties of the intersection
multiplicity, we obtain
\begin{equation}\label{eq_teissier}
    I_{0}(fg,f_xg_y-f_yg_x)=\mu(fg)-1.
\end{equation}

\begin{remark}
    Note that if $(f:g)$ is a semitame meromorphic function germ, then it follows from Corollary \ref{caracsemitame1} that $\mu(fg)=\mu(f,g)-\tau(f,g)+\tau(fg)$, which together with \eqref{eq_teissier} implies the relation
    \[
    I_{0}(fg,\, f_xg_y-f_yg_x)=\mu(f,g)-\tau(f,g)+\tau(fg)-1.
    \]
\end{remark}

\par Finally, combining this last equality with the bifurcation formula (Theorem~\ref{th bif}), we obtain the following relation between the Jacobian curve $J(f,g)$ and $\mu(f,g)$:

\begin{proposition}
Let $f,g \in \mathcal O_2$ be relatively prime germs, and assume that both vanish at the origin. Then
\[I_{0}(fg,f_xg_y-f_yg_x)=\mu(f,g)-\sum_{[\alpha:\beta]\in \mathcal{B}(f,g)^{*}}(\mu_{0}(\alpha f+ \beta g)-\mu_{gen})-1,\]
 where $\mathcal{B}(f,g)^{*}=\mathcal{B}(f,g)\backslash \{0,\infty\}$.  \end{proposition}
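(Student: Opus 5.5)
The plan is to combine \eqref{eq_teissier} with the Bifurcation Formula (Theorem~\ref{th bif}); no new computation should be needed. By Lê's formula applied to $f$ and to $g$ separately, and by additivity of intersection multiplicity in its first argument, \eqref{eq_teissier} gives
\[
I_0(fg, f_xg_y-f_yg_x)=\mu(fg)-1 .
\]
This identity uses only that $f,g$ are coprime and vanish at the origin, and that $fg$ is reduced, which holds since the fibers of the pencil are reduced by \cite[Lemma 4.1]{FFMR}. In particular it does not need the semitame hypothesis of the preceding remark, so the first step is to note that it holds for arbitrary pairs as in the statement.

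Next, Theorem~\ref{th bif} gives
\[
\mu(f,g)=\mu_0(fg)+\sum_{[\alpha:\beta]\in\mathcal{B}(f,g)^{*}}\bigl(\mu_0(C_{[\alpha:\beta]})-\mu_{gen}\bigr),
\]
where $\mu_0$ denotes the Milnor number at the origin, so $\mu_0(fg)=\mu(fg)$. Solving for $\mu(fg)$ and substituting into the first identity yields
\[
I_0(fg,f_xg_y-f_yg_x)=\mu(f,g)-\sum_{[\alpha:\beta]\in\mathcal{B}(f,g)^{*}}\bigl(\mu_0(\alpha f+\beta g)-\mu_{gen}\bigr)-1,
\]
which is the claim, once we recall that $C_{[\alpha:\beta]}$ is the curve $\alpha f+\beta g=0$.

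The only delicate point is checking the hypotheses of the two ingredients. Lê's formula $I_0(f,J(f,g))=\mu(f)+I_0(f,g)-1$ needs $f$ reduced and $f,g$ without common components. The sum $I_0(f,J)+I_0(g,J)$ equals $\mu(fg)-1$ because $\mu(fg)=\mu(f)+\mu(g)+2I_0(f,g)-1$. The Bifurcation Formula requires the standing assumption that $\F_{\omega_{f/g}}$ has an isolated singularity, which makes $\mu(f,g)$ finite. I would state explicitly that these standing assumptions of the paper are in force; beyond that, the proof is a direct substitution.
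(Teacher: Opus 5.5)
Your proof is correct and follows the paper's own route: you combine \eqref{eq_teissier} with the Bifurcation Formula (Theorem~\ref{th bif}), where \eqref{eq_teissier} comes from L\^e's formula for $f$ and for $g$ plus $\mu(fg)=\mu(f)+\mu(g)+2I_0(f,g)-1$, and then solve for $\mu_0(fg)$. You are also right that \eqref{eq_teissier} needs no semitame hypothesis, which is why the statement holds for arbitrary pairs.
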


\section{An upper bound for $\mu(f,g)$}

Let $f$ and $g$ be relatively prime germs of reduced holomorphic functions, and
let $\mathcal P_{f,g}$ be the pencil generated by them. In this section, we derive an upper bound for $\mu(f,g)$ in terms of the Tjurina number of the product of $f,g$  and the bifurcation fibers of $\mathcal P_{f,g}$, together with the intersection multiplicity $I_0(f,g)$.

\begin{theorem}\label{teocota}
Let $f,g\in\mathcal{O}_2$ be coprime germs vanishing at the origin, and let
$h_1,\ldots,h_r$ denote the bifurcation fibers of the pencil $\mathcal P_{f,g}$ generated by
$f$ and $g$. Assume that every member of the pencil is reduced. Then
\[
\mu(f,g)\le
2\tau(fgh_1\cdots h_r)-2r(r+2)I_0(f,g).
\]
\end{theorem}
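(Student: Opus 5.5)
The plan is to reduce the statement to an inequality that involves only the foliation $\mathcal{F}_{\omega_{f/g}}$ and the curve $\B_0 = fgh_1\cdots h_r$. Here $h_1,\ldots,h_r$ are the bifurcation fibers and $f,g$ are taken as the remaining members of the zero divisor, as in Section 3.

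\textbf{Reduction.} Lemma \ref{lemma_tjurina}, applied to $\B_0$ as in \eqref{equa1}, gives
\[
\tau(\mathcal{F}_{\omega_{f/g}},\B_0)=\tau(fgh_1\cdots h_r)-r(r+2)I_0(f,g).
\]
So the right-hand side of the theorem is exactly $2\tau(\mathcal{F}_{\omega_{f/g}},\B_0)$, and it suffices to prove
\[
\mu(f,g)=\mu(\mathcal{F}_{\omega_{f/g}})\le 2\,\tau(\mathcal{F}_{\omega_{f/g}},\B_0).
\]
Example \ref{ejemplo43} is consistent with this: there $\mu(f,g)=33$, $2\tau(\mathcal{F}_{\omega_{f/g}},\B_0)=66$, and $2\cdot 168-2\cdot 3\cdot 5\cdot 9=66$.

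\textbf{Algebraic step.} Write $P=gf_x-fg_x$, $Q=gf_y-fg_y$ and $h=fgh_1\cdots h_r$, and set $A=\mathcal{O}_2/(P,Q)$. Since the singularity is isolated, $A$ is an Artinian complete intersection, hence Gorenstein, with $\dim_{\mathbb C}A=\mu(f,g)$. Multiplication by $h$ gives the exact sequence
\[
0\to \mathrm{Ann}_A(h)\to A\xrightarrow{\cdot h} A\to A/hA\to 0 .
\]
From it, $\dim \mathrm{Ann}_A(h)=\dim A/hA=\tau(\mathcal{F}_{\omega_{f/g}},\B_0)$ and $\mu(f,g)=\dim A/hA+\dim hA$. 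The inequality therefore becomes $\dim hA\le \dim A/hA=\dim \mathrm{Ann}_A(h)$. This follows at once if $hA\subseteq \mathrm{Ann}_A(h)$, that is, if
\[
h^2=(fgh_1\cdots h_r)^2\in (gf_x-fg_x,\;gf_y-fg_y).
\]

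\textbf{Main obstacle.} The key step is this ideal membership, and I would attack it first. Each fiber $C_{[\alpha:\beta]}$ is $\mathcal{F}_{\omega_{f/g}}$-invariant, so for every factor $u$ of $h$ we have $du\wedge\omega_{f/g}=u\,\eta_u$ for some holomorphic $\eta_u$. Combining these relations over all factors, together with $f\,dg-g\,df=-\omega_{f/g}$, should express $h\,\partial h$-type products in terms of $P$ and $Q$. Concretely, I would try to write $h\,dh = \lambda\,\omega_{f/g}+h\,\theta$ and iterate once more to get $h^2\in(P,Q)$. The choice of $\B_0$ matters here: it is the zero divisor of a balanced divisor, so it contains every bifurcation fiber. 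Following Genzmer, its vanishing orders along the dicritical components of the resolution are large enough that $h^2$ lies in the ideal generated by the coefficients of $\omega_{f/g}$.

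\textbf{Fallback.} If exact membership fails, I would settle for $\dim hA\le\dim A/hA$ directly. To do this I would compute both sides on the resolution of the pencil, using the branch formulas \eqref{tuj_eq} and Theorem \ref{th bif} for $\mu(f,g)$. I would then compare the result termwise with $\tau(\B_0)$, using the known bound $\mu\le\frac{4}{3}\tau$ for reduced plane curves on each bifurcation fiber.
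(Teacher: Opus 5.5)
Your reduction is the same as the paper's. Lemma~\ref{lemma_tjurina} applied to the $r+2$ members $f,g,h_1,\dots,h_r$ (equation \eqref{equa1}) turns the right-hand side into $2\tau(\F_{\omega_{f/g}},\B_0)$. Your linear-algebra step is also correct, and Gorenstein is not even needed: kernel and cokernel of multiplication by $h$ on the finite-dimensional algebra $A$ have equal dimension, and $h^2=0$ in $A$ forces $hA\subseteq \mathrm{Ann}_A(h)$.

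The gap is that the whole theorem then rests on the membership $h^2\in(P,Q)$, equivalently on $\mu(\F_{\omega_{f/g}})\le 2\tau(\F_{\omega_{f/g}},\B_0)$, and your proposal does not prove it. This is not a routine step. The paper gets it by noting that $\F_{\omega_{f/g}}$ is a generalized curve foliation, hence of second type, and then invoking the Brian\c{c}on--Skoda theorem for foliations of second type (the result it cites as Theorem~B). The second-type property is what makes such an inequality true, and it never enters your argument.

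The mechanisms you sketch do not supply the missing step.
\begin{itemize}
\item The identity $h\,dh=\lambda\,\omega_{f/g}+h\,\theta$ is vacuous: take $\lambda=0$, $\theta=dh$.
\item The invariance relations are explicit here. For every member $u=\alpha f+\beta g$ one has $du\wedge\omega_{f/g}=-u\,df\wedge dg$, so $dh\wedge\omega_{f/g}=-(r+2)\,h\,df\wedge dg$, i.e.\ $h_yP-h_xQ=(r+2)\,h\,(f_xg_y-f_yg_x)$. This only shows $h\,(f_xg_y-f_yg_x)\in(P,Q)$, and nothing in your sketch turns it into $h^2\in(P,Q)$.
\item Conditions on vanishing orders along exceptional divisors can at most certify integral dependence $h\in\overline{(P,Q)}$. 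Passing from there to $h^2\in(P,Q)$ would still require the classical Brian\c{c}on--Skoda theorem in dimension two, $\overline{I^2}\subseteq I$. Moreover, the inequalities $v_E(h)\ge v_E\bigl((P,Q)\bigr)$ would have to hold for \emph{all} divisorial valuations centred at $0$, not only along the dicritical components. That verification is exactly where second type would have to be used, and you do not carry it out.
\item The fallback does not close the gap either. Expanding $\tau(\B_0)$ by \eqref{ecc26}, the right-hand side becomes $2\tau(fg)+2\sum_j\tau(h_j)-r(r+1)I_0(f,g)+2(\Theta_{\B_0}+\Theta_{h_1\cdots h_r})$. The negative term $-r(r+1)I_0(f,g)$ must therefore be absorbed by the $\Theta$-terms, and a fibrewise bound $\mu<\frac43\tau$ gives no information about them.
\end{itemize}
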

\begin{proof}
   To prove the result, consider the foliation $\F_{\omega_{f/g}}:
\omega_{f/g}=gdf-fdg$. By hypothesis, every member of the pencil generated by $f$ and $g$ is reduced. Hence, by \cite[Lemma 4.1]{FFMR}, we may assume that
$\omega_{f/g}$ has an isolated singularity at the origin. Furthermore, by \cite[Lemma 4.3]{FFMR}, the foliation $\F_{\omega_{f/g}}$ admits a balanced divisor of separatrices $\B=\B_0-\B_{\infty}$, where $\B_0=(f)+(g)+(h_1)+\cdots+(h_r)$, and $\B_{\infty}=(\varphi_1)+\cdots+(\varphi_r),$ with $\varphi_1,\ldots,\varphi_r$ generic fibers of the pencil generated by
$f$ and $g$.

\par By \cite[Lemma~4.1]{FFMR}, the foliation
$\mathcal{F}_{\omega_{f/g}}$ is of generalized curve type. Therefore, it is of
the second type, and the Brian\c{c}on--Skoda
Theorem~\cite[Theorem~B]{Skoda} yields

\begin{equation}\label{ecc33}
\mu(\F_{\omega_{f/g}})=\mu(f,g)\leq 2\tau(\F_{\omega_{f/g}},\B_0).
\end{equation}
Finally, Lemma \ref{lemma_tjurina} gives
\[
\tau(\F_{\omega_{f/g}},\B_0)=\tau(fgh_1\cdots h_r)-r(r+2)I_0(f,g).
\]
Substituting this identity into \eqref{ecc33} completes the proof.
\end{proof}

\par The following example is a modification of an example of Genzmer given in \cite[p. 2]{Genzmerejemplo}.
\begin{example}
    Let $f=xy+y^2+x^3$, $g=xy$ and let $h=f-g=y^2+x^3$ be the unique bifurcation fiber. 
    Using SINGULAR \cite{Singular}, we compute
    \[
    \mu(f,g)=12,\qquad \tau(fgh)=27,\qquad r=1,\qquad I_0(f,g)=5.
    \]
    Hence, Theorem~\ref{teocota} yields
    \[
    12=\mu(f,g)\leq 2\tau(fgh_1\cdots h_r)-2r(r+2)I_0(f,g)=24.
    \]
\end{example}

\section{On a Dimca Formula for Generic pencils}

Let $C_1: f = 0$ and $C_2 : g = 0$ be reduced algebraic curves of degree $k$ in $\mathbb{P}^2$, and let $\mathcal{P} : \alpha C_1 + \beta C_2$ denote the pencil in $\mathbb{P}^2$ generated by $C_1$ and $C_2$.

Following Dimca \cite[Section 5]{Dimca2017}, we say that $\mathcal{P}$ is \textit{generic} if the curves $C_1$ and  $C_2$ intersect transversely in exactly $ k^2$ points.

Under this assumption, the generic member of $\mathcal{P}$ is smooth, and every member of the pencil $\mathcal{P}$ is smooth at each of the $k^2$ base points. Let $C_1^s,\ldots,C_\ell^s$ denote  the singular members of this pencil  $\mathcal{P}$. Dimca \cite[Proposition 5.1]{Dimca2017} proved the following result using tame regular functions (see \cite[Corollary 3.5]{Siersma95}) and the Euler characteristic of complex constructible sets. We provide an alternative proof based on foliations.
For any reduced algebraic curve $C$ in $\mathbb{P}^2$, we denote $\mu(C):=\sum_p\mu_p(C)$ its \textit{global Milnor number}.
\begin{proposition}\label{prop_pencil}
If the pencil $\mathcal{P}$ is generic, then the sum of the global Milnor numbers of its singular members $C^s_j$  satisfies
\[
\sum_{j=1}^\ell \mu(C_j^s) = 3(k-1)^2.
\]
\end{proposition}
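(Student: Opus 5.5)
Consider the global foliation $\mathcal{F}$ on $\mathbb{P}^2$ defined by the pencil, i.e. by the homogeneous $1$-form $\Omega = g\,df - f\,dg$ (contracted with the Euler field, this gives a foliation of degree $d$ on $\mathbb{P}^2$). The plan is to count its singularities in two ways: globally, via the Darboux--Baum--Bott formula $\sum_{p}\mu_p(\mathcal{F}) = d^2+d+1$; and locally, by classifying where the singular points lie and computing $\mu_p(\mathcal{F})$ with the pair invariant $\mu(f,g)$ of Section~2.

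\textbf{Degree of the foliation.} In an affine chart, $\omega = g\,df - f\,dg$ has coefficients of degree $2k-1$. Contraction with the radial vector field gives $i_R(g\,df - f\,dg) = k\,gf - k\,fg = 0$. Hence the form is already projective, and provided it has no codimension-one zero locus, the induced foliation has degree $d = 2k-2$. Genericity ensures there is no common factor, since $f$ and $g$ are coprime and the pencil has reduced generic members.

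\textbf{Local analysis at base points.} At each of the $k^2$ transverse base points, $f,g$ are local coordinates, so $\omega = g\,df - f\,dg$ is the radial foliation. Its Milnor number is $\mu(f,g) = 1$; indeed the Bifurcation Formula (Theorem~\ref{th bif}) gives $\mu(fg) = 1$ for a node and no bifurcation terms. Thus the base points contribute $k^2$ in total.

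\textbf{Local analysis off the base locus.} Away from the base points, near $p$ with $g(p)\neq 0$, the foliation is locally given by $d(f/g)$. Hence $\mu_p(\mathcal{F}) = \mu_p(f/g - c)$, which equals the Milnor number at $p$ of the member $C_{[1:-c]}$ through $p$. Summing over all such points gives $\sum_j \mu(C_j^s)$. We also need that every singularity of $\mathcal{F}$ is either a base point or a singular point of a member, which is clear from the local description.

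\textbf{Counting and main obstacle.} Darboux's formula now gives
\[
k^2+\sum_{j=1}^{\ell}\mu(C_j^s) = d^2+d+1 = (2k-2)^2+(2k-2)+1 = 4k^2-6k+3.
\]
Hence $\sum_j\mu(C_j^s) = 3k^2-6k+3 = 3(k-1)^2$. The main obstacle is justifying the degree $d=2k-2$, i.e. that $\Omega$ has only isolated zeros in $\mathbb{P}^2$ with no saturation needed. A divisorial component of the singular set would lie in some member of the pencil, and along it $f/g$ would have to be locally critical; this forces a non-reduced member, which is excluded by reducedness. I would verify this carefully, possibly also checking points on the line at infinity by choosing coordinates so that the line at infinity is transverse to the pencil.
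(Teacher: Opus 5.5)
Your proof is correct, and its global structure is the paper's. Both use the Darboux count $\sum_p\mu_p(\mathcal{F}_{\omega_{f/g}})=d^2+d+1$ with $d=2k-2$, split as $k^2$ (one for each base point, where $I_p(f,g)=1$) plus the Milnor numbers of the singular members.

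The difference is in how you get the local identity $\mu_p(\mathcal{F}_{\omega_{f/g}})=I_p(f,g)+\sum_j\mu_p(C_j^s)$. The paper derives it from its own machinery: Corollary~\ref{mu} (the GSV index along any member equals $I_p(f,g)$), combined with $\mu_p(\mathcal{F})=\mu_p(\mathcal{F},\mathcal{B})$ for generalized curve foliations from \cite{FS-GB-SM2}. It uses a balanced divisor adapted to $C_1^s+\cdots+C_\ell^s$ and treats all singular points in one computation. You use normal forms instead:
\begin{itemize}
\item at a base point, $(u,v)=(f,g)$ are local coordinates and the foliation is the radial one $v\,du-u\,dv$, with $\mu=1$ by direct computation (Theorem~\ref{th bif} is not needed);
\item elsewhere, $f/g$ or $g/f$ is a local holomorphic first integral, so $\mu_p$ is the Milnor number of a local equation of the unique member through $p$, since $\mu$ is invariant under multiplication by a unit.
\end{itemize}
Your route is self-contained and separates cleanly the two kinds of singular points, which the paper's computation handles simultaneously. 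The paper's route is chosen to showcase the pair invariants developed earlier.

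The step you leave open is that $g\,df-f\,dg$ has no divisorial zero, so that $d=2k-2$; the paper also asserts this without proof. Your reduction to ``no member is non-reduced'' is right. Your first justification (coprime generators, reduced generic member) is not enough: $f=xy-z^2$, $g=xy+z^2$ give $g\,df-f\,dg=2z\,(yz\,dx+xz\,dy-2xy\,dz)$, because the member $z^2$ is double. What you need is that genericity makes every member reduced. Suppose $\Gamma$ were a multiple component of a member $M\neq C_1$. By B\'ezout, $\Gamma$ meets $C_1$ at a point of $C_1\cap M=C_1\cap C_2$, i.e.\ a base point. There $M$ would be singular, contradicting the smoothness of every member at the base points. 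With this filled in, the line at infinity needs no special treatment, since your local analysis works in any affine chart.
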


\begin{proof}
Let $\mathcal{F}_{\omega _{f/g}}$ be the foliation defined by the \( 1 \)-form
$\omega_{f/g} := g\, df - f\, dg$.
Since the pencil $\mathcal{P}$ is generic, the foliation $\F_{\omega_{f/g}}$  has only isolated singularities on $\mathbb{P}^2$. Therefore,
in a neighborhood of each point $p \in \operatorname{Sing}(\F_{\omega _{f/g}})$, Corollary \ref{mu} applies and yields:
\begin{equation}\label{eq_mil}
\mu_p(\mathcal{F}_{\omega _{f/g}}, \alpha C_1 + \beta C_2) = 
\begin{cases} 
I_p(f,g) + \mu_p(\alpha C_1 + \beta C_2), & \text{if } \alpha C_1 + \beta C_2 \text{ is singular at $p$} \\ 
I_p(f,g), & \text{otherwise.}
\end{cases}
\end{equation}
Let $C_j^s$ denote the singular members of $ \mathcal{P}$, with $j = 1, \ldots, \ell$. Then
\begin{equation}\label{eq:asterisco}
\mu_p(\mathcal{F}_{\omega _{f/g}}, C_j^s) = I_p(f,g) + \mu_p(C_j^s), \hbox{\rm for all } p\in\operatorname{Sing}(\F_{\omega _{f/g}})\cap\operatorname{Sing}(C_j^{s}). \end{equation}

Let $C^s = C_1^s + \cdots + C_\ell^s$  be the divisor of reduced singular curves. We construct a balanced divisor of separations $\mathcal{B}$ for $\mathcal{F}_{\omega _{f/g}}$ adapted to  $C^s$, namely  
\[
\mathcal{B} = (C_1^s + \cdots + C_\ell^s) + (D_1 + \cdots + D_r) - (\Lambda_1 + \cdots + \Lambda_m),
\]
where $D_1, \ldots, D_r, \Lambda_1, \ldots, \Lambda_m$ are smooth members of the pencil. The construction of $\B$ follows from \cite[Lemma 4.3]{FFMR}.

Hence, since \( \mathcal{F}_{\omega _{f/g}} \) is a generalized curve foliation (its reduction of singularities has no saddle-nodes) by \cite[Section 5]{FS-GB-SM2}, we have
\[
\begin{aligned}
\mu_p(\mathcal{F}_{\omega _{f/g}}) =& \mu_p(\mathcal{F}_{\omega _{f/g}}, \mathcal{B})\\
= & \sum_{j=1}^\ell \mu_p(\mathcal{F}_{\omega _{f/g}}, C_j^s) + \sum_{j=1}^r \mu_p(\mathcal{F}_{\omega _{f/g}}, D_j) - \sum_{j=1}^m \mu_p(\mathcal{F}_{\omega _{f/g}}, \Lambda_j) \\
-& (\ell + r - m) + 1.
\end{aligned}
\]
Applying \eqref{eq_mil} and \eqref{eq:asterisco}, we obtain:
\[
\mu_p(\mathcal{F}_{\omega _{f/g}})=\sum_{j=1}^\ell \mu_p(C_j^s) + i_p(f,g)(\ell+r -m) - (\ell + r - m) + 1.
\]
Since \( f \) and \( g \) intersect transversely at $p\in C_1\cap C_2$, we have \( I_p(f,g) = 1 \). Therefore,
\[
\mu_p(\mathcal{F}_{\omega _{f/g}}) = 1 + \sum_{j=1}^\ell \mu_p(C_j^s)
\]
or equivalently
\[
\mu_p(\mathcal{F}_{\omega _{f/g}}) = I_p(f,g) + \sum_{j=1}^\ell \mu_p(C_j^s).
\]

Globally, the foliation $\mathcal{F}_{\omega _{f/g}}$ defined by \( \omega_{f/g} = g\, df - f\, dg \), has
$\deg(\mathcal{F}_{\omega _{f/g}}) = 2k-2$. Since the total  Milnor number of the foliation is determined by its degree (see \cite{Brunella-book}), we obtain:
\[
(2k-2)^2 + (2k-2)+1 = \sum_p \mu_p(\mathcal{F}_{\omega _{f/g}}) = \sum_p i_p(f,g) + \sum_p \sum_{j=1}^\ell\mu_p(C_j^s),
\]
\noindent so
$4k^2 - 6k + 3 = k^2 + \sum_p \sum_{j=1}^\ell \mu_p(C_j^s)$ which implies that
\[
 \sum_{j=1}^{\ell}\mu(C_j^{s})=\sum_{j=1}^\ell \sum_p \mu_p(C_j^s)=\sum_p \sum_{j=1}^\ell \mu_p(C_j^s) = 3k^2 - 6k + 3 = 3(k-1)^2,
\]
as required.
\end{proof}

Analogously, we define the {\it global Tjurina number of} $C$ by $\tau(C) := \sum_{p} \tau_p(C)$.

\begin{corollary}\label{eq:coroglobal}
If the pencil $\mathcal{P}$ is generic, then
\[
\frac{9}{4} (k-1)^2 < \sum_{j=1}^\ell \tau(C_j^s) \leq 3(k-1)^2.
\]
Moreover,
\[
\sum_{j=1}^{\ell} \tau(C_j^s) = 3(k-1)^2
\]
if and only if all singularities of the curves $C_j^s$ are quasi-homogeneous.
\end{corollary}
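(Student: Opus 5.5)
The plan is to combine Proposition \ref{prop_pencil} with local inequalities between the Milnor and Tjurina numbers of plane curve singularities, summed over all singular points of the members $C_j^s$.

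For the upper bound, I use that $\tau_p(C)\le\mu_p(C)$ holds at every point $p$ for any reduced plane curve germ, since the Tjurina ideal contains the Jacobian ideal. Summing over all singular points of each $C_j^s$ and then over $j$ gives $\sum_j\tau(C_j^s)\le\sum_j\mu(C_j^s)=3(k-1)^2$, by Proposition \ref{prop_pencil}. Equality holds if and only if $\tau_p(C_j^s)=\mu_p(C_j^s)$ at every singular point, because each local difference is nonnegative. By Saito's theorem \cite{Saito}, this means that every singularity of every $C_j^s$ is quasi-homogeneous. This settles the second assertion.

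For the strict lower bound, I would use the known local inequality for reduced plane curve singularities $\mu_p/\tau_p<4/3$, that is, $\tau_p>\frac34\mu_p$. This is the theorem of Alberich-Carrami\~nana, Almir\'on, Blanco and Melle-Hern\'andez proving Dimca--Greuel's question; the irreducible case is due to Almir\'on and to Wang, and the reduced case to Genzmer--Hernandes. The inequality is strict at each singular point.

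It remains to check that there is at least one singular point. Proposition \ref{prop_pencil} gives $\sum_j\mu(C_j^s)=3(k-1)^2>0$ for $k\ge 2$, so the set of singular points is nonempty. Summing the strict local inequalities then gives $\sum_j\tau(C_j^s)>\frac34\cdot3(k-1)^2=\frac94(k-1)^2$. For $k=1$ the statement is vacuous or degenerate, and I would simply assume $k\ge2$.

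The main obstacle is the lower bound. It depends entirely on citing the $4/3$ inequality correctly in the generality of reduced, not necessarily irreducible, germs, including its strictness. Everything else is a direct summation over the finitely many singular points.
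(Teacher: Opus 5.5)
Your proof is correct and follows the paper's proof: summing $\tau_p\le\mu_p$ (with Saito's characterization of equality) and the strict local bound $\tau_p>\tfrac34\mu_p$ over the singular points gives both inequalities and the equality case via Proposition~\ref{prop_pencil}; the paper cites Almir\'on's Theorem~3.2 for the local bound, and your remarks on the singular locus being nonempty and on excluding $k=1$ add care the paper omits. One correction to your attributions: Alberich-Carrami\~nana--Almir\'on--Blanco--Melle-Hern\'andez and Genzmer--Hernandes proved the irreducible (branch) case of $\mu_p/\tau_p<4/3$, whereas the general reduced case you need is Almir\'on's theorem, which is the result the paper uses.
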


\begin{proof}
For every singular point $p \in C_j^s$, Almir\'on Theorem \cite[Theorem 3.2]{Almiron} yields
\[
\frac{3}{4} \mu_p(C_j^s) < \tau_p(C_j^s) \quad \hbox{\rm for all }\; j = 1, \cdots, \ell.
\]
Summing over the singular points of $C_j^s,$ we obtain
\[
\frac{3}{4} \mu(C_j^s) < \tau(C_j^s).
\]
Summing again over $j=1,\ldots, \ell$ and applying Proposition \ref{prop_pencil}, we conclude that:
\[
\frac{9}{4} (k-1)^2 < \sum_{j=1}^{\ell} \tau(C_j^s).
\]

\noindent On the other hand, Saito's Theorem \cite{Saito} states that $\tau_p(C_j^s) \le \mu_p(C_j^s)$, with equality if and only if every singularity of $C_j^s$ is quasi-homogeneous. Summing over the singular points of $C_j^s$, we obtain $\tau(C_j^s) \le \mu(C_j^s)$, and hence

\[
\sum_{j=1}^{\ell} \tau(C_j^s) \le \sum_{j=1}^{\ell} \mu(C_j^s) = 3(k-1)^2.
\]
Thus,
\[
\frac{9}{4} (k-1)^2 < \sum_{j=1}^{\ell} \tau(C_j^s) \leq 3(k-1)^2,
\]
where equality in the upper bound holds if and only if every singularity of the curves $C_j^{s}$ is quasi-homogeneous (Saito's Theorem \cite{Saito}).
\end{proof}
\begin{remark}
    An interesting consequence of Corollary \ref{eq:coroglobal} is that, when $k = 2$,
\[
\sum_{j=1}^{\ell} \tau(C_j^s) = 3 = \sum_{j=1}^{\ell} \mu(C_j^s).
\]
Hence, every singularity of the curves $C_j^s$ is quasi-homogeneous by Saito's Theorem \cite{Saito}.
\end{remark}

\vspace{1cm}

\noindent{\bf{Funding}}\\
The authors gratefully acknowledge the support of Universidad de La Laguna (Tenerife, Spain), where part of this work was carried out. This work was partially supported by  the Spanish grant PID2023-149508NB-I00, funded by 
MICIU/AEI
/10.13039/501100011033 and by
FEDER, UE. It was also supported by  the Vicerrectorado de Investigación
(VRI) at the PUCP through grant DFI-2025-PI1275. The first author also acknowledges the support of CNPq-Brazil through Projeto Universal 408687/2023-1 "Geometria das Equa\c{c}\~oes Diferenciais Alg\'ebricas" and the PQ fellowship CNPq-Brazil 306011/2023-9.\\

\noindent{\bf{Acknowledgments}}\\
 Arturo Fernández Pérez and Nancy Saravia Molina would like to thank the Universidad de La Laguna for its hospitality, where this work was completed.

\vspace{1cm} 

\noindent{\bf Data Availability Statement:} 
Data sharing is not applicable to this article as no data sets were generated or analyzed during the current study.
\vspace{1cm}

\noindent{\bf Declarations
Conflict of Interest:} The authors declare that they have no conflict of interest.

\end{document}